\makeatletter \@addtoreset{equation}{section} \makeatother
\renewcommand\thetable{\thesection.\@arabic\c@table}
\theoremstyle{plain}
\newtheorem{maintheorem}{Theorem}
\newtheorem{theorem}{Theorem}[section]
\newtheorem{proposition}{Proposition}[section]
\newtheorem{lemma}{Lemma}[section]
\newtheorem{remark}{Remark}[section]
\newtheorem*{theorem*}{Main Theorem}
\newtheorem*{claim}{Claim}
\newcommand{\N}{\mathbb{N}}
\newcommand{\R}{\mathbb{R}}
\newcommand{\diam}{\operatorname{diam}}
\newcommand{\graph}{\operatorname{graph}}
\newcommand{\Lip}{\operatorname{Lip}}
\newcommand{\loc}{\operatorname{loc}}
\newcommand{\cI}{{\mathcal I}}
\newcommand{\cP}{\mathcal{P}}
\newcommand{\cQ}{\mathcal{Q}}
\newcommand{\cT}{\mathcal{T}}
\newcommand{\cS}{\mathcal{S}}
\begin{document}
\begin{sloppypar}
	\title{Upper semi-continuity of metric entropy for diffeomorphisms with dominated splitting}
	
\author{Chiyi Luo, Wenhui Ma and Yun Zhao\footnote{
	This work was partially supported by National Key R\&D Program of China (2022YFA1005801).
Y. Zhao is partially supported by NSCF(12271386). }}
	\date{}    
	
	\maketitle
		
	\begin{abstract}
		For a $C^{r}$ $(r>1)$  diffeomorphism on a compact manifold that admits a dominated splitting, this paper establishes the upper semi-continuity of the entropy map. More precisely, this paper establishes the upper semi-continuity of the entropy map in the following two cases:
		(1) if a sequence of invariant measures has only positive Lyapunov exponents along a sub-bundle and non-positive Lyapunov exponents along another sub-bundle,  then the upper limit of their metric entropies is less than or equal to the entropy of the limiting measure; (2) if an invariant measure has positive Lyapunov exponents along a sub-bundle and non-positive Lyapunov exponents along another sub-bundle, then the entropy map is upper semi-continuous at this measure.
	\end{abstract}
	   
   \tableofcontents

   \section{Introduction}\label{SEC:1}
   Entropy is an important invariant quantity in dynamical systems, which plays a central role in understanding the chaotic behavior of a system.  The metric entropy  measures the average amount of information and complexity in the system, and the topological entropy characterizes the exponential growth rate of the number of periodic points.   Given a dynamical system $(M,f)$, the  map $\mu\mapsto h_{\mu}(f)$ defined on  the set of all $f$-invariant measures is called entropy map in this paper.   In the entropy theory of dynamical systems,  it is an interesting topic to study the continuity of entropy map.  However,  the entropy map is generally not lower semi-continuous, even in uniformly hyperbolic systems. 
   For example, by shadowing lemma, one can construct a sequence of ergodic measures supported on periodic orbits, but it converges to an invariant measure with positive entropy.  This paper studies the upper semi-continuity of entropy map for a class of non-uniformly hyperbolic dynamical systems.

   For $C^{\infty}$ diffeomorphisms on a compact manifold, Newhouse \cite{Newhouse1989} proved that the entropy map is upper semi-continuous.  For $C^{r}$ diffeomorphisms on a compact manifold with finite positive $r$, the upper semi-continuity of the entropy map may fail, e.g., 
   Misiurewicz \cite{Misiurewicz1973} gave examples of diffeomorphisms on a four dimensional compact manifold, and  examples in the two dimensional case are given by Buzzi \cite{Buzzi2014}. However, one can establish the upper semi-continuity of the entropy map for smooth maps  under certain conditions. 
   For $C^r$ surface diffeomorphisms, Burguet \cite{Burguet2024} proved that the entropy map is upper semi-continuous at ergodic measures with large entropy.
   For $C^1$ diffeomorphisms away from homoclinic tangencies,  in \cite{Liao2013} the authors proved that the entropy map is upper semi-continuous. Under some additional conditions, in \cite{Liao2015} the authors showed that the entropy map is upper semi-continuous for diffeomorphisms admit a dominated splitting. 
  
   In this paper,   for a $C^r$ diffeomorphism with dominated splitting, we will show the upper semi-continuity of the entropy map provided that the Lyapunov exponents of the invariant measures statisfy some conditions which are weaker than that of \cite{Liao2015}, see Remark \ref{LS} for detailed descriptions. 
     
  Now we will give some necessary notions to present the statements of the main results in this paper.
   Let $f:M\rightarrow M$ be a $C^r$ $(r>1)$ diffeomorphism on a compact Riemannian manifold $M$ without boundary.
   We say that an $f$-invariant compact subset $\Lambda\subset M$  admits a dominated splitting of $f$,  if there exist $\lambda>0$ and a continuous $Df$-invariant splitting $T_{\Lambda}M=E(x)\bigoplus F(x)$ such that 
   $$\dfrac{m(D_xf|_{F(x)})}{ \|D_xf|_{E(x)}\|}\geq e^{\lambda}, \forall x\in \Lambda$$
   where $\|A\|$ is the norm and $m(A):=\inf_{\|v\|=1} \|A(v)\|$ is the co-norm of a linear map $A$.
   
   For every $f$-invariant measure $\mu$ supported on $\Lambda$, by the sub-additive ergodic theorem, the following limits 
   $$\lambda_F^{\min}(x):=\lim_{n\rightarrow  \infty} \frac{1}{n} \log m(D_xf^n|_{F(x)}),~\lambda_E^{\max}(x):=\lim_{n\rightarrow  \infty} \frac{1}{n} \log \|D_xf^n|_{E(x)}\|$$
   are well-defined for $\mu$-almost every $x\in \Lambda$. Moreover, if $\mu$ is ergodic then $\lambda_F^{\min}(x)$ and  $\lambda_E^{\max}(x)$ are constants for $\mu$-almost every $x\in \Lambda$, which are denoted by 
$\lambda_F^{\min}(\mu)$ and $\lambda_E^{\max}(\mu)$ respectively.
   
   \begin{maintheorem}\label{Thm:A}
   	Let $f:M\rightarrow M$ be a $C^r\, (r>1)$ diffeomorphism  on a compact Riemannian  manifold, and let $\Lambda\subset M$ be an $f$-invariant compact subset that admits a dominated splitting $T_{\Lambda}M=E \bigoplus F$. 
   	Then, there exists $\varepsilon_f>0$ such that if $\nu$ is an ergodic measure supported on $\Lambda$ satisfies that 
   	\begin{enumerate}
   		\item[$ \bullet$]   $\lambda_E^{\max}(\nu)\leq 0$ and $\lambda_F^{\min}(\nu)>0$,
   		\item[$ \bullet$] $\cQ$ is a finite partition with $\nu(\partial \cQ)=0$ and $\diam(\cQ)<\varepsilon_{f}$, 
   	\end{enumerate}
   	we have that $h_{\nu}(f)=h_{\nu}(f,\cQ)$.
   \end{maintheorem}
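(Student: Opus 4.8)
\medskip
\noindent The plan is to prove Theorem~\ref{Thm:A} through a localized form of Bowen's entropy expansiveness. Fix $\varepsilon_f>0$, to be specified in terms of $f$, $\Lambda$ and the domination constant $\lambda$ only, and for $x\in\Lambda$ put
\[
B^{+}_{\varepsilon_f}(x):=\bigl\{\,y\in\Lambda:\ \dist(f^{n}y,f^{n}x)\le\varepsilon_f\ \text{ for all }n\ge 0\,\bigr\},\qquad
\Gamma_{\varepsilon_f}(x):=\bigcap_{n\in\Z}f^{-n}\bar B\bigl(f^{n}x,\varepsilon_f\bigr).
\]
The whole proof reduces to the geometric claim that $\htop\!\bigl(f,\Gamma_{\varepsilon_f}(x)\bigr)=0$ for $\nu$-a.e.\ $x$. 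Indeed, starting from the identity $h_\nu(f)=h_\nu(f,\cQ)+\sup_{\cP}h_\nu(f,\cP\mid\cQ)$, where $\cP$ ranges over finite partitions and $h_\nu(f,\cP\mid\cQ)=\lim_{n}\tfrac1n H_\nu\bigl(\bigvee_{i=0}^{n-1}f^{-i}\cP\mid\bigvee_{i=0}^{n-1}f^{-i}\cQ\bigr)$, one estimates each term $h_\nu(f,\cP\mid\cQ)$ by $\int\htop(f,\Gamma_{\varepsilon_f}(x))\,d\nu(x)$ once $\diam\cQ<\varepsilon_f$: this is the Bowen-type bound on the entropy defect $h_\nu(f)-h_\nu(f,\cQ)$, and it is here that $\nu(\partial\cQ)=0$ is used, via Ma\~n\'e's lemma, to replace counts of atoms of $\cP$ inside an atom of $\bigvee_{i=0}^{n-1}f^{-i}\cQ$ by counts of genuinely $(n,\rho)$-separated points inside a Bowen ball.

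\medskip
\noindent First I would show that, for $\varepsilon_f$ small and $\nu$-a.e.\ $x$, $B^{+}_{\varepsilon_f}(x)$ lies on a local disc tangent to $E$. By the Hirsch--Pugh--Shub plaque-family theorem for the dominated splitting $T_\Lambda M=E\oplus F$ there is a locally $f$-invariant family of $C^{1}$ discs $\cW^{E}_{\loc}(x)$ (respectively $\cW^{F}_{\loc}(x)$) of dimension $\dim E$ (respectively $\dim F$), tangent to $E(x)$ (respectively $F(x)$) at $x$, with uniformly bounded geometry. Since $\lambda_F^{\min}(\nu)>0$, the subadditive ergodic theorem gives forward expansion $m\bigl(D_xf^{n}|_{F(x)}\bigr)\ge c(x)\,e^{n\lambda_F^{\min}(\nu)/2}$ for all $n\ge0$, $\nu$-a.e., with $c(\cdot)$ bounded below on Pesin (Lusin-type) blocks; and since $r>1$, Pesin theory in the dominated setting furnishes genuine local unstable discs tangent to $F$ at $\nu$-a.e.\ $x$. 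A graph-transform/holonomy argument along the $\cW^{F}$-plaques then forces every $y\in B^{+}_{\varepsilon_f}(x)$ to lie on $\cW^{E}_{\loc}(x)$: a nonzero ``$F$-component'' of $y$ relative to that disc would be expanded exponentially by $D_xf^{n}$, contradicting forward $\varepsilon_f$-closeness. In particular $\Gamma_{\varepsilon_f}(x)\subseteq B^{+}_{\varepsilon_f}(x)\subseteq\cW^{E}_{\loc}(x)$ for $\nu$-a.e.\ $x$.

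\medskip
\noindent Next I would restrict the dynamics to these discs. Because $\lambda_E^{\max}(\nu)\le0$, for every $\delta>0$ and $\nu$-a.e.\ $x$ one has $\|D_yf^{n}|_{T_y\cW^{E}_{\loc}(x)}\|\le e^{\delta n}$ for all $n\ge0$ and all $y\in\cW^{E}_{\loc}(x)$, uniformly on Pesin blocks; here $r>1$ enters through H\"older continuity of $E$ and bounded nonlinear distortion along the plaques. Hence for $y,y'\in\Gamma_{\varepsilon_f}(x)\subseteq\cW^{E}_{\loc}(x)$ we get $\dist(f^{j}y,f^{j}y')\le e^{\delta n}\dist(y,y')$ for $0\le j\le n$, so an $(n,\rho)$-separated subset of $\Gamma_{\varepsilon_f}(x)$ consists of points pairwise at distance $\ge\rho e^{-\delta n}$ in a $\dim E$-dimensional disc of diameter $\le\varepsilon_f$, hence has at most $(C\varepsilon_f\rho^{-1}e^{\delta n})^{\dim E}$ elements. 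Thus the exponential growth rate of $(n,\rho)$-separated subsets of $\Gamma_{\varepsilon_f}(x)$ is at most $\delta\dim E$; letting $n\to\infty$, then $\rho\to0$, then $\delta\to0$ gives $\htop(f,\Gamma_{\varepsilon_f}(x))=0$ for $\nu$-a.e.\ $x$, and the reduction of the first paragraph then yields $h_\nu(f)=h_\nu(f,\cQ)$.

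\medskip
\noindent The main obstacle I anticipate is the inclusion $B^{+}_{\varepsilon_f}(x)\subseteq\cW^{E}_{\loc}(x)$ with $\varepsilon_f$ depending only on $f$: the $F$-expansion coming from $\lambda_F^{\min}(\nu)>0$ is only non-uniform, so the argument must be carried out on Pesin blocks and then patched, and one must check that the set of $x$ for which some forward $\varepsilon_f$-shadowing point escapes the $E$-plaque is $\nu$-null. A secondary, more routine point is the Ma\~n\'e-type estimate in the reduction, which controls how often the orbit of $x$ (and of the nearby atoms of $\cQ$) comes within a rapidly shrinking distance of $\partial\cQ$ --- this is exactly where $\nu(\partial\cQ)=0$ is needed.
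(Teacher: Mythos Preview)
Your strategy is genuinely different from the paper's, and it contains a real gap.

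The paper never attempts to control the dynamics along the $E$-direction. Instead it invokes Ledrappier--Young: under $\lambda_E^{\max}(\nu)\le 0$ one has $h_\nu(f)=h_\nu^{F}(f)$, the partial entropy along the Pesin unstable lamination $W^F$ tangent to $F$. The remaining task is the inequality $h_\nu^{F}(f)\le h_\nu(f,\cQ)$, which the paper proves by picking a single unstable disc $D_\nu\subset W^F_{\loc}(x_\nu)$, a conditional measure $\omega_\nu$ on it, and estimating $\tfrac1n H_{\omega_\nu}(\cP_\nu^n\mid\cQ^n)$ by counting how many small sub-discs of $D_\nu$ are needed to cover $D_\nu\cap Q_n$ for each atom $Q_n\in\cQ^n$. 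The only geometric input is that iterates of $D_\nu$ remain graphs over $F$ with $\Lip\le\tfrac13$ (Proposition~\ref{Prop: Domin}), a statement that follows from domination alone and needs no Lyapunov information about $E$. The nonuniformity of the $F$-expansion is handled by returning to a Pesin block at time $m_n^\nu(x)\ge n(1-\beta)$ and paying a controlled price $C_f^{n-m_n^\nu(x)}$ for the remaining steps.

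Your second step is where the argument breaks. You claim that for $\nu$-a.e.\ $x$ and every $y\in\cW^E_{\loc}(x)$ one has $\|D_yf^n|_{T_y\cW^E_{\loc}(x)}\|\le e^{\delta n}$, citing ``bounded nonlinear distortion along the plaques''. But bounded distortion requires summability of $\dist(f^j y,f^j x)^\alpha$ along the orbit, which in turn needs genuine contraction along $E$; here you only have $\lambda_E^{\max}(\nu)\le 0$, so the $E$-plaques are at best neutral and no such summability is available. Moreover, the hypothesis $\lambda_E^{\max}(\nu)\le 0$ controls $\tfrac1n\log\|D_xf^n|_E\|$ only at $\nu$-typical $x$, and this quantity is the subadditive limit, not the Birkhoff average $\tfrac1n\sum_j\log\|D_{f^jx}f|_E\|$; the latter can be strictly positive, so telescoping from $x$ to a nearby $y$ via one-step norm comparisons does not give what you want either. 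Without this growth bound you cannot conclude that $(n,\rho)$-separated subsets of $\Gamma_{\varepsilon_f}(x)$ grow subexponentially, and the argument collapses. Note also that the reduction you invoke---an integral bound $h_\nu(f,\cP\mid\cQ)\le\int\htop(f,\Gamma_{\varepsilon_f}(x))\,d\nu$---is not Bowen's theorem (which has $\sup_x$ on the right) and would itself need a proof.

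The moral is that the paper buys uniformity by working on the \emph{expanding} side, where domination alone pins down the geometry, and uses Ledrappier--Young to avoid ever touching the neutral $E$-direction.
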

   
   As a consequence, one can show the following theorem.
   
   \begin{maintheorem}\label{Thm:B}
   Let $f:M\rightarrow M$ be a $C^r\, (r>1)$ diffeomorphism  on a compact Riemannian  manifold, and let $\Lambda\subset M$ be an $f$-invariant compact subset that admits a dominated splitting $T_{\Lambda}M=E \bigoplus F$.  If  a sequence of invariant measures $(\nu_k)_{k=1}^{\infty}$  satisfies 
   	\begin{enumerate}
   		\item[$ \bullet$] for each $k>0$, one has $\nu_k(\Lambda)=1$, $\lambda_E^{\max}(x)\leq 0$ and $\lambda_F^{\min}(x)>0$ for $\nu_k$-almost every $x$;
   		\item[$ \bullet$] $\nu_k$ converges to $\mu$ in the  weak star topology,
   	\end{enumerate}
   	then we have that 
   	$\displaystyle{\limsup\limits_{k\to\infty} h_{\nu_k}(f)\leq h_{\mu}(f)}$.   	
   \end{maintheorem}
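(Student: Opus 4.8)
The plan is to deduce Theorem~\ref{Thm:B} from Theorem~\ref{Thm:A} by pinning down a \emph{single} finite partition $\cQ$ that simultaneously computes the full metric entropy of every $\nu_k$ (through Theorem~\ref{Thm:A}) and behaves well under weak$^{*}$ convergence. First I would construct such a $\cQ$. For any finite Borel measure $\nu$ on $M$ and any $x\in M$ the set of radii $r$ with $\nu(\partial B(x,r))>0$ is countable, so for each $x$ I can choose a single radius $r_x<\varepsilon_f/2$ with $\mu(\partial B(x,r_x))=0$ and $\nu_k(\partial B(x,r_x))=0$ for every $k$ at once. Extracting a finite subcover $B(x_1,r_{x_1}),\dots,B(x_N,r_{x_N})$ by compactness of $M$ and disjointifying in the standard way ($Q_j:=B(x_j,r_{x_j})\setminus\bigcup_{i<j}B(x_i,r_{x_i})$) yields a finite partition $\cQ$ with $\diam(\cQ)<\varepsilon_f$, $\mu(\partial\cQ)=0$ and $\nu_k(\partial\cQ)=0$ for all $k$, since $\partial\cQ\subseteq\bigcup_{i}\partial B(x_i,r_{x_i})$.

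Next, fix $k$ and take the ergodic decomposition $\nu_k=\int\eta\,d\tau_k(\eta)$. The functions $x\mapsto\lambda_E^{\max}(x)$ and $x\mapsto\lambda_F^{\min}(x)$ are fixed Borel functions (liminf/limsup of the continuous functions $\tfrac1n\log\|D_xf^n|_{E(x)}\|$, $\tfrac1n\log m(D_xf^n|_{F(x)})$), so the set $\{x\in\Lambda:\lambda_E^{\max}(x)\le0,\ \lambda_F^{\min}(x)>0\}\setminus\partial\cQ$, being of full $\nu_k$-measure, has full $\eta$-measure for $\tau_k$-a.e.\ ergodic $\eta$; for such $\eta$ one gets $\eta(\Lambda)=1$, $\eta(\partial\cQ)=0$, $\lambda_E^{\max}(\eta)\le0$ and $\lambda_F^{\min}(\eta)>0$. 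Theorem~\ref{Thm:A} then applies to $\eta$ and $\cQ$, and together with subadditivity of $n\mapsto H_\eta(\cQ^n)$ (where $\cQ^n:=\bigvee_{i=0}^{n-1}f^{-i}\cQ$) gives $h_\eta(f)=h_\eta(f,\cQ)\le\tfrac1n H_\eta(\cQ^n)$ for every $n\ge1$. Integrating against $\tau_k$, using the ergodic decomposition of entropy (affinity of the entropy map) and the concavity of $m\mapsto H_m(\cQ^n)$ in the favorable direction (integral of entropies $\le$ entropy of the integral),
\begin{equation*}
h_{\nu_k}(f)=\int h_\eta(f)\,d\tau_k(\eta)\ \le\ \frac1n\int H_\eta(\cQ^n)\,d\tau_k(\eta)\ \le\ \frac1n H_{\nu_k}(\cQ^n),\qquad n\ge1.
\end{equation*}

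Finally I would let $k\to\infty$ and then $n\to\infty$. Each atom of $\cQ^n$ has topological boundary contained in $\bigcup_{i=0}^{n-1}f^{-i}(\partial\cQ)$, which is $\mu$-null because $\mu$ is $f$-invariant; hence $\nu_k\to\mu$ in the weak$^{*}$ topology forces $\nu_k(Q)\to\mu(Q)$ for every atom $Q$ of $\cQ^n$ and therefore $H_{\nu_k}(\cQ^n)\to H_\mu(\cQ^n)$ (there are finitely many atoms and $t\mapsto-t\log t$ is continuous). Taking $\limsup_{k}$ in the displayed inequality gives $\limsup_k h_{\nu_k}(f)\le\tfrac1n H_\mu(\cQ^n)$ for each fixed $n$, and then taking the infimum over $n$ gives $\limsup_k h_{\nu_k}(f)\le h_\mu(f,\cQ)\le h_\mu(f)$, which is the claim.

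Since Theorem~\ref{Thm:A} does the real work, the only genuinely delicate points are bookkeeping ones: producing one partition whose boundary is null for $\mu$ and for all of the $\nu_k$ simultaneously, and checking that the Lyapunov-exponent inequalities and the null-boundary condition descend to $\tau_k$-a.e.\ ergodic component so that Theorem~\ref{Thm:A} is legitimately applicable there; one must also be careful that the ergodic-decomposition integration and the concavity of $H_\cdot(\cQ^n)$ are combined in the direction that preserves the desired inequality.
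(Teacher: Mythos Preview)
Your argument is correct and follows the same line as the paper's proof: apply Theorem~\ref{Thm:A} to the ergodic components of each $\nu_k$, bound $h_{\nu_k}(f)$ by $\tfrac1n H_{\nu_k}(\cQ^n)$ for a single partition $\cQ$ with null boundary, and pass to the limit using $\mu(\partial\cQ)=0$. The only difference is that the paper first approximates each $\nu_k$ by a nearby finite convex combination $\nu_k'=\sum_i\alpha_k^i\nu_k^i$ of representative ergodic components and applies Theorem~\ref{Thm:A} to those finitely many $\nu_k^i$, whereas you integrate directly over the full ergodic decomposition and invoke concavity of $\nu\mapsto H_\nu(\cQ^n)$; your route avoids that discretization step and is slightly more streamlined.
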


\begin{remark}  {\rm \label{LS} Under the set up of the above theorem,  given a sequence of invariant measures $\{\nu_k\}$, if there exists $\chi>0$ such that: (1)$\lambda_E^{\max}(x)< -\chi$ and $\lambda_F^{\min}(x)>\chi$ for $\nu_k$-almost every $x$ for every $k>0$; (2) $\nu_k$ converges to $\mu$ in the  weak star topology, and $\lambda_E^{\max}(x)< -\chi$ and $\lambda_F^{\min}(x)>\chi$ for $\mu$-almost every $x$, then the same conclusion holds. See \cite{Liao2015} for the details.  We point out that they only require that $f$ is a $C^1$ diffeomorphism.
   Our results do not require $\lambda_E^{\max}(x), \lambda_F^{\min}(x)$ to be uniformly far from zero, and there are no conditions imposed on the limiting measure.}
   \end{remark}

   Finally, we show the upper semi-continuity of the entropy map under some conditions on the limiting measure. 
   
   \begin{maintheorem}\label{Thm:C}
   	 Let $f:M\rightarrow M$ be a $C^r\, (r>1)$ diffeomorphism  on a compact Riemannian  manifold, and let $\Lambda\subset M$ be an $f$-invariant compact subset that admits a dominated splitting $T_{\Lambda}M=E \bigoplus F$.
   	Assume that $\mu$ is an $f$-invariant measure supported on $\Lambda$ with
   	$\lambda_E^{\max}(x)\leq 0$ and $\lambda_F^{\min}(x)>0$ for $\mu$-almost every $x$.
   	Then, the entropy map  is upper semi-continuous at $\mu$.
   \end{maintheorem}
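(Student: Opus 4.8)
The plan is to derive Theorem~\ref{Thm:C} from a quantitative strengthening of Theorem~\ref{Thm:A}, combined with the classical upper semi-continuity of the partition entropy and the weak-$*$ upper semi-continuity of two subadditive exponent integrals; I will argue first for sequences carried by $\Lambda$ and then indicate the reduction to the general case. So let $(\mu_k)_k$ be $f$-invariant probability measures with $\mu_k\to\mu$ in the weak-$*$ topology; we must show $\limsup_k h_{\mu_k}(f)\le h_\mu(f)$, and for the moment assume $\mu_k(\Lambda)=1$ for all $k$. Fix $\varepsilon_f>0$ as in Theorem~\ref{Thm:A}. Since $\mu,\mu_1,\mu_2,\dots$ is a countable family, one can choose a finite partition $\cQ$ with $\diam(\cQ)<\varepsilon_f$ and $\mu(\partial\cQ)=0=\mu_k(\partial\cQ)$ for every $k$ (take a one-parameter family of partitions with pairwise disjoint boundaries and discard the countably many parameters charged by $\mu$ or by some $\mu_k$). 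Write $\cQ_n:=\bigvee_{i=0}^{n-1}f^{-i}\cQ$.

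\emph{Two soft inputs.} First, $\mu(\partial\cQ_n)=0$, so $\nu\mapsto\tfrac1n H_\nu(\cQ_n)$ is continuous at $\mu$; hence $\nu\mapsto h_\nu(f,\cQ)=\inf_n\tfrac1n H_\nu(\cQ_n)$, an infimum of maps continuous at $\mu$, is upper semi-continuous at $\mu$, so $\limsup_k h_{\mu_k}(f,\cQ)\le h_\mu(f,\cQ)\le h_\mu(f)$. Second, for $x\in\Lambda$ put $\phi_n(x):=\big(\log\|D_xf^n|_{E(x)}\|\big)^+$ and $\psi_n(x):=\big(-\log m(D_xf^n|_{F(x)})\big)^+$, where $t^{\pm}:=\max\{\pm t,0\}$. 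These are non-negative, continuous on $\Lambda$ and subadditive along orbits (submultiplicativity of $\|\cdot\|$, supermultiplicativity of $m(\cdot)$, and $(a+b)^+\le a^++b^+$), so Kingman's theorem gives, for every $f$-invariant $\nu$ carried by $\Lambda$,
\[
\int(\lambda_E^{\max})^+\,d\nu=\inf_n\tfrac1n\!\int\phi_n\,d\nu,\qquad\int(\lambda_F^{\min})^-\,d\nu=\inf_n\tfrac1n\!\int\psi_n\,d\nu .
\]
As infima of weak-$*$ continuous maps, $\nu\mapsto\int(\lambda_E^{\max})^+d\nu$ and $\nu\mapsto\int(\lambda_F^{\min})^-d\nu$ are upper semi-continuous; since $\mu$-a.e.\ $x$ satisfies $\lambda_E^{\max}(x)\le0$ and $\lambda_F^{\min}(x)>0$, both integrals vanish at $\mu$, hence $\int(\lambda_E^{\max})^+d\mu_k\to0$ and $\int(\lambda_F^{\min})^-d\mu_k\to0$.

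\emph{The key estimate and the conclusion.} The crux is a quantitative form of Theorem~\ref{Thm:A}: there is a constant $C=C(f)>0$, independent of $\eta$ and of $\cQ$, such that every ergodic $\eta$ carried by $\Lambda$ with $\eta(\partial\cQ)=0$ satisfies
\[
h_\eta(f)\ \le\ h_\eta(f,\cQ)+C\Big((\lambda_E^{\max}(\eta))^+ + (\lambda_F^{\min}(\eta))^-\Big).
\]
When $\lambda_E^{\max}(\eta)\le0$ and $\lambda_F^{\min}(\eta)>0$ the error term is zero and this is precisely Theorem~\ref{Thm:A}; I expect the estimate to fall out of the proof of Theorem~\ref{Thm:A} essentially for free, since that proof shows a partition of diameter $<\varepsilon_f$ already carries all of the entropy except along the $E$-direction and the non-expanding part of $F$ at scale $\varepsilon_f$, and the size of this defect is controlled by how much those directions can expand, i.e.\ by $(\lambda_E^{\max})^+$ and $(\lambda_F^{\min})^-$. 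Granting it, I would integrate over the ergodic decomposition of $\mu_k$ — using the affinity of $h_\bullet(f)$ and of $h_\bullet(f,\cQ)$ under the ergodic decomposition, and that $\mu_k(\partial\cQ)=0$ forces $\eta(\partial\cQ)=0$ for a.e.\ component $\eta$ — to obtain
\[
h_{\mu_k}(f)\ \le\ h_{\mu_k}(f,\cQ)+C\Big(\int(\lambda_E^{\max})^+d\mu_k+\int(\lambda_F^{\min})^-d\mu_k\Big),
\]
and then let $k\to\infty$ and insert the three limits above, getting $\limsup_k h_{\mu_k}(f)\le h_\mu(f,\cQ)\le h_\mu(f)$.

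\emph{Removing $\mu_k(\Lambda)=1$, and the main obstacle.} For general $\mu_k\to\mu$ one decomposes $\mu_k$ into its ergodic components carried by $\Lambda$ and the rest; because $\mu$ is carried by the compact set $\Lambda$ and the dominated splitting persists on the maximal invariant set in a neighbourhood of $\Lambda$, the ``escaping'' part is absorbed by the same argument applied on that slightly larger invariant set together with $\htop(f)<\infty$ — routine, but requiring some care. The genuinely substantive step of the whole proof is the displayed quantitative estimate: extracting it, with a constant uniform in $\eta$ and $\cQ$, from the machinery behind Theorem~\ref{Thm:A} is where the real work lies. Everything else — semicontinuity of the partition entropy, Kingman's subadditive ergodic theorem, affinity of entropy under the ergodic decomposition — is classical.
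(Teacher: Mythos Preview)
Your overall architecture — pick a partition $\cQ$ with null boundary, use upper semi-continuity of $\nu\mapsto h_\nu(f,\cQ)$, use the subadditive ergodic theorem to control the exponent integrals, and pass through the ergodic decomposition — is exactly the skeleton of the paper's proof. The gap is in your displayed ``quantitative form of Theorem~\ref{Thm:A}'':
\[
h_\eta(f)\ \le\ h_\eta(f,\cQ)+C\Big((\lambda_E^{\max}(\eta))^+ + (\lambda_F^{\min}(\eta))^-\Big).
\]
The $(\lambda_E^{\max})^+$ half of this is genuinely available, and is precisely what the paper uses: when $\lambda_F^{\min}(\eta)>0$, the partial entropy $h_\eta^F(f)$ along $W^F$ is defined, Theorem~\ref{Prop:Key} gives $h_\eta^F(f)\le h_\eta(f,\cQ)$, and the Ledrappier--Young recursion (property~(2) in Section~\ref{Sec:entropy}) yields $h_\eta(f)\le h_\eta^F(f)+\dim E\cdot(\lambda_E^{\max}(\eta))^+$, so $C=\dim E$ works.

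The $(\lambda_F^{\min})^-$ half, however, does \emph{not} fall out of the proof of Theorem~\ref{Thm:A}. That proof (via Theorem~\ref{Prop:Key}) rests entirely on the existence of Pesin local unstable manifolds $W^F(x)$ tangent to $F$, the conditional measures on them, and Proposition~\ref{Pro:unstable-entropy-partition}; all of this requires $\lambda_F^{\min}(\eta)>0$. If some exponent along $F$ is nonpositive, there are no $W^F$-manifolds, $h_\eta^F$ is undefined, and the machinery simply does not start. Nothing in the argument produces a bound on the entropy defect proportional to how negative $\lambda_F^{\min}(\eta)$ is. So your expectation that this term comes ``essentially for free'' is the substantive error.

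The paper sidesteps this entirely. From the same subadditive upper semi-continuity you use, it proves $\nu_k\big(\{\lambda_F^{\min}\le 0\}\big)\to 0$ and $\nu_k\big(\{\lambda_E^{\max}\ge\chi\}\big)\to 0$ for each $\chi>0$ (its Claim). It then splits $\nu_k=(1-\varepsilon_k)\nu_{k,1}+\varepsilon_k\nu_{k,2}$ with $\nu_{k,1}$ supported on $\{\lambda_E^{\max}\le\chi,\ \lambda_F^{\min}>0\}$ and $\varepsilon_k\to 0$; the bad part contributes at most $\varepsilon_k\cdot\htop(f)\to 0$. On the good part every ergodic component $\eta$ has $\lambda_F^{\min}(\eta)>0$, so Theorem~\ref{Prop:Key} applies and only the $(\lambda_E^{\max})^+$-type error $\dim E\cdot\chi$ is needed. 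In short: the paper handles the bad $F$-behaviour by \emph{measure} (it is asymptotically negligible), not by an entropy inequality; your proposed inequality is unnecessary, and obtaining it would require ideas beyond what is in the paper.
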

   
   In particular, for partially hyperbolic systems, if the center bundle decomposes into a dominated way of one-dimensional subbundles, then the entropy map is upper-semi continuous. 
   This result can also be deduced from \cite{Buzzi2018}.

   Theorem \ref{Thm:C} is not new, it only requires  that $f$ is $C^1$, as shown in \cite{Liao2020}, for instance. 
   In this paper, we utilize the results of Ledrappier and Young \cite{Young85-2} (valid for $C^2$ diffeomorphisms) and those in \cite{Brown2022} which extend to $C^{1+}$ diffeomorphisms.
   Therefore, we assume that $f$ is $C^r$ with $r>1$.
      
   Tail entropy is used to estimate the degree to which entropy fails to be upper semi-continuous.
   It quantifies  how many arbitrarily small Bowen balls are required to cover a  Bowen ball of fixed-size.
   By Ledrappier and Young's result \cite{Young85-2}, entropy only arises along unstable manifolds. 
   Then, we estimate the “tail entropy” along unstable manifolds, which exhibit a certain degree of expansiveness.
   
   In Burguet \cite{Burguet2024}, Yomdin’s theory is used to provide a uniform diameter. 
   However, in our approach, we apply dominated splitting to establish a uniform diameter instead. For any local unstable manifold with this uniform size,  there will be no bending along the strong subbundle direction after iterations of the dynamics.

  The rest of the paper is organized as follows. In Section \ref{Sec:entropy}, we review basic properties of partial entropy along unstable manifolds. Section \ref{Sec:Dominated} establishes a uniform diameter bound for partitions, as stated in Theorem \ref{Thm:A}, using the properties of dominated splitting. 
  Section \ref{Sec:PofBC} provides the proof of Theorems \ref{Thm:B} and  \ref{Thm:C}, by assuming the validity of Theorem \ref{Prop:Key} which is  a partial entropy version of Theorem \ref{Thm:A}. Section \ref{Sec:PofT4} and Section \ref{Sec:PofP5} are devoted to the proof of Theorem \ref{Prop:Key}.
   
    \section{Preliminaries}\label{Sec:entropy} 
    In this section, we will recall some basic properties of entropy and some useful results that are used in the proof of the main results.

   Let $(M,f)$ be given as in the previous section.  For a probability measure $\mu$ (not necessarily invariant) and a finite partition $\mathcal P$, define the \emph{static entropy} of $\mu$ as
    $$H_\mu(\mathcal P)=\sum_{P\in\mathcal P}-\mu(P)\log\mu(P)=\int -\log\mu(P(x)){\rm d}\mu(x).$$
    For  $n\ge 1$, let
    $${\mathcal P}^n:={\mathcal P}^{n,f}=\bigvee_{j=0}^{n-1}f^{-j}(\mathcal P).$$
    
    \begin{lemma}[\cite{Walters82} Section 8.2] \label{Lem:11}
    	Let $\mu$ be a probability measure and $\mathcal P$  a finite partition.  
    	Then, for every $n>0$ and every $0<m<n$, we have
    	\begin{equation}
    		\dfrac{1}{n}H_{\mu}(\cP^n)\leq \dfrac{1}{m}H_{\mu_n}(\cP^m)+\dfrac{2m}{n} \log \# \cP
    	\end{equation}
    	where $\displaystyle{\mu_n:=\frac{1}{n} \sum_{k=0}^{n-1} \mu \circ f^{-k}}$ and $\# \cP$ is the cardinality of $\cP$.
    \end{lemma}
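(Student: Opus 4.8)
The plan is to exploit the self-similar structure of the refinements $\cP^{n}$: one cuts the time window $\{0,1,\dots,n-1\}$ into consecutive blocks of length $m$, allows $m$ different choices for where the first block starts, and then averages over these choices. This is the classical argument underlying the statement (Walters, Section~8.2); I record here the form in which it is used.

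First I would fix $0<m<n$ and, for each shift $j\in\{0,1,\dots,m-1\}$, write $n-j=q_j m+s_j$ with $q_j\ge 0$ and $0\le s_j\le m-1$. Since $f^{-k}(\cP^{m})=\bigvee_{i=k}^{k+m-1}f^{-i}\cP$, the refinement $\cP^{n}=\bigvee_{i=0}^{n-1}f^{-i}\cP$ decomposes as
\begin{equation}
\cP^{n}=\Big(\bigvee_{i=0}^{j-1}f^{-i}\cP\Big)\ \vee\ \Big(\bigvee_{l=0}^{q_j-1}f^{-(j+lm)}(\cP^{m})\Big)\ \vee\ \Big(\bigvee_{i=j+q_j m}^{n-1}f^{-i}\cP\Big).
\end{equation}
Using subadditivity of the static entropy, $H_\mu(\cA\vee\cB)\le H_\mu(\cA)+H_\mu(\cB)$, together with the elementary bound $H_\mu(f^{-i}\cP)\le\log\#\cP$, and observing that the two end groups involve exactly $j+s_j\le 2m-2$ coordinates, I would deduce
\begin{equation}
H_\mu(\cP^{n})\ \le\ \sum_{l=0}^{q_j-1}H_\mu\big(f^{-(j+lm)}(\cP^{m})\big)+2m\log\#\cP .
\end{equation}

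Next I would use that $f^{-k}$ is a bijection of $M$, so $H_\mu(f^{-k}\cA)=H_{\mu\circ f^{-k}}(\cA)$ for any finite partition $\cA$; in particular $H_\mu\big(f^{-(j+lm)}(\cP^{m})\big)=H_{\mu\circ f^{-(j+lm)}}(\cP^{m})$. Summing the inequality above over $j=0,\dots,m-1$, and noting that the base points $j+lm$ (with $0\le j\le m-1$, $0\le l\le q_j-1$) are pairwise distinct and all lie in $\{0,\dots,n-1\}$ while each $H_{\mu\circ f^{-k}}(\cP^{m})$ is nonnegative, gives
\begin{equation}
m\,H_\mu(\cP^{n})\ \le\ \sum_{k=0}^{n-1}H_{\mu\circ f^{-k}}(\cP^{m})+2m^{2}\log\#\cP .
\end{equation}
Finally, since $t\mapsto -t\log t$ is concave, $\nu\mapsto H_\nu(\cP^{m})$ is a concave function of the measure, so $\tfrac1n\sum_{k=0}^{n-1}H_{\mu\circ f^{-k}}(\cP^{m})\le H_{\mu_n}(\cP^{m})$ with $\mu_n=\tfrac1n\sum_{k=0}^{n-1}\mu\circ f^{-k}$. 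Dividing the last display by $mn$ then yields $\tfrac1n H_\mu(\cP^{n})\le \tfrac1m H_{\mu_n}(\cP^{m})+\tfrac{2m}{n}\log\#\cP$, which is the claim.

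The only genuine analytic ingredient is the concavity of $\nu\mapsto H_\nu(\cdot)$; everything else is combinatorial bookkeeping. The step I expect to be the most delicate is the block decomposition together with the count of leftover coordinates — specifically, checking that $q_j\ge 0$ and $s_j\le m-1$ uniformly (including the degenerate case $q_j=0$), that the base points entering the double sum are not overcounted, and that after averaging over the $m$ shifts and dividing by $n$ the error term comes out to be precisely $\tfrac{2m}{n}\log\#\cP$.
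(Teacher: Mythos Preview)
Your argument is correct and is precisely the classical block-decomposition proof from Walters, Section~8.2; the paper itself does not supply a proof of this lemma but simply cites that reference. There is nothing to compare: your write-up reproduces the standard proof that the citation points to.
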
  
        
    For an $f$-invariant measure $\mu$ and a finite partition $\mathcal P$, the metric entropy of $\mu$ with respect to $\mathcal P$ is defined as follows: 
    $$h_\mu(f,\mathcal P)=\lim_{n\to\infty}\frac{1}{n}H_\mu({\mathcal P}^n);$$
    and the \emph{metric entropy} of $\mu$ is defined as 
    $$h_\mu(f)=\sup \Big\{h_\mu(f,\mathcal P): \cP~\text{is a finite partition} \Big\}.$$
    
    Recall the Oseledec theorem \cite{Oseledec68}: Let $\mu$ be an $f$-invariant ergodic measure, there are finite numbers 
    $$\lambda_1(\mu,f)>\lambda_2(\mu,f)>\cdots>\lambda_t(\mu,f)$$
    and a $Df$-invariant splitting over a full $\mu$-measure set 
    $$E^1\bigoplus E^2\bigoplus\cdots\bigoplus E^t$$
    such that $\displaystyle{\sum_{i=1}^t \dim E^i=\dim M}$, 
    and for each non-zero vector $v\in E^i$
    $$\lim_{n\to\pm\infty}\frac{1}{n}\log\|D_{x}f^n v\|=\lambda_i(\mu,f),~\mu\text{-a.e.}\ x.$$
    When there is no confusion caused, one denotes $\lambda_i(\mu,f)$ by $\lambda_i(\mu)$ simply.
    
    For each $1\leq i \leq t$ with $\lambda_i(\mu)>0$, let  
    \begin{equation}\label{eq:SB}
    	E^{u,i}(x)=\bigoplus_{j=1}^{i}E^j(x),~E^{s,i}(x)=\bigoplus_{j=i+1}^{t}E^j(x).
    \end{equation}
    By the theory of unstable manifolds \cite{Pesin07}, for $\mu$-almost every $x$,
    \begin{equation}\label{eq:Unstable}
        W^{u,i}(x):=\left\{y\in M: \limsup_{n\to\infty} \frac{1}{n} \log d(f^{-n}(x),f^{-n}(y))\leq -\lambda_i(\mu) \right\}
    \end{equation}
    is a $C^{r}$ immersed submanifold tangent to $E^{u,i}(x)$ and inherits a Riemannian metric from $M$.
    We denote this distance by $d_x^{u,i}$. 
    With this distance we define $(n,\rho)$-Bowen balls by 
    $$ V^{u,i}(x,n,\rho):=\left\{y\in W^{u,i}(x): d_{f^j(x)}^{u,i}(f^j(x),f^j(y))<\rho,~\forall 0\leq  j <n \right\}.$$
        
   By \cite{Ledrappier82} and \cite{Young85-2}, there exists a measurable partition $\xi$ subordinate to $W^{u,i}$ with respect to $\mu$, meaning that  $\xi(x)\subset W^{u,i}(x)$ and $\xi(x)$ contains an open neighborhood of $x$ in $W^{u,i}(x)$ for $\mu$-almost every point $x$, where $\xi(x)$ denotes the element of $\xi$ which contains $x$.
    
    From Rokhlin \cite{Rohlin67}, for the measurable partition $\xi$, there is a family of conditional measures $\{ \mu_{\xi(x)}\}$ such that: (1) $\mu_{\xi(x)}$ is supported on $\xi(x)$; 
    (2) $\displaystyle{\mu(A)=\int \mu_{\xi(x)} (A) {\rm d}\mu }$ for every measurable subset $A$.
    
    As in Ledrappier and Young \cite{Young85-2}, the partial entropy along $W^{u,i}$ is defined as follows: 
    \begin{equation} \label{eq:LY}
    	h_{\mu}^{i}(f)=\lim_{\rho \rightarrow 0} \liminf_{n\rightarrow \infty} -\frac{1}{n} \log \mu_{\xi(x)}(V^{u,i}(x,n,\rho))=\lim_{\rho \rightarrow 0}\limsup_{n\rightarrow \infty} -\frac{1}{n} \log \mu_{\xi(x)}(V^{u,i}(x,n,\rho)).
    \end{equation}
    This limit exists and is constant $\mu$-almost everywhere. 
    For each $1\leq i\leq t$,
    \begin{enumerate}
    	\item[(1)]  if $\lambda^{i}(\mu)>0$ and $\lambda^{i+1}(\mu)\leq 0$, then $h_{\mu}^{i}(f)=h_{\mu}(f)$;
    	\item[(2)]  if $\lambda^{i}(\mu)>\lambda^{i+1}(\mu)>0$, then $h_{\mu}^{i+1}(f)\leq h_{\mu}^{i}(f)+\dim E^{i+1}\cdot \lambda^{i+1}(\mu)$.
    \end{enumerate}    
    The usual Bowen ball is defined by 
    $$B(x,n,\rho):=\Big\{y\in M: d(f^j(x),f^j(y))<\rho, \forall 0\leq j< n\Big\}.$$
    Note that  $\mu_{\xi(x)}(V^{u,i}(x,n,\rho))\leq \mu_{\xi(x)}(B(x,n,\rho))$ for $\mu$-almost every $x$. Now we will recall two results in \cite{Yang2024}, which are useful in the proof of the main results in this paper.
    
    \begin{proposition}[\cite{Yang2024}, Proposition 2.1]\label{Prop:Two Balls}
    	For every $\varepsilon>0$ and every $\delta>0$, there exist a subset $K\subset M$ with $\mu(K)>1-\delta$ and $\rho>0$, such that 
    	$$ h_{\mu}^{i}(f)\leq \liminf_{n\rightarrow +\infty} -\frac{1}{n}\log \mu_{\xi(x)}\left(K\cap B(x,n,\rho)\right)+\varepsilon, \quad \forall x\in K.$$
    \end{proposition}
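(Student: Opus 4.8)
The plan is to build $K$ and $\rho$ by discarding from a full--measure set finitely many pieces of small measure on which three sorts of control become uniform, and then to show that on the remaining set the ambient Bowen ball $B(x,n,\rho)$, measured by $\mu_{\xi(x)}$, does not exceed the leafwise Bowen ball $V^{u,i}(x,n,\rho_1)$ for a suitable fixed $\rho_1$; the estimate of the proposition then follows from the Ledrappier--Young formula \eqref{eq:LY}. Put $L:=\max\{1,\sup_{z\in M}\|D_zf\|\}$. First, by Pesin theory together with Lusin's and Egorov's theorems, fix a compact set $\Lambda_0$ with $\mu(\Lambda_0)>1-\delta/4$ and constants $\rho_0>0$, $C\ge1$ so that for every $z\in\Lambda_0$ the leafwise ball $\{w\in W^{u,i}(z):d^{u,i}_z(z,w)<\rho_0\}$ lies inside an embedded disc $W^{u,i}_{\loc}(z)$ on which $d^{u,i}_z(z,\cdot)\le C\,d(z,\cdot)$. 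Second, since $\mu(\Lambda_0)>0$, Poincar\'e recurrence and Birkhoff's theorem show that $\mu$-a.e.\ point visits $\Lambda_0$ infinitely often along its forward orbit, so for $g\in\N$ large enough the set $G_g$ of $x$ with $\{f^j(x),\dots,f^{j+g}(x)\}\cap\Lambda_0\neq\emptyset$ for all $j\ge0$ has $\mu(G_g)>1-\delta/4$. Third, after replacing $\xi$ by a finer subordinate partition whose elements have $d^{u,i}$-diameter $<\rho_1/(2L^{g})$ on $\Lambda_0$ --- which does not change $h_{\mu}^{i}(f)$ --- formula \eqref{eq:LY} and Egorov's theorem let us pick $\rho_1\in(0,\rho_0/L)$, a compact set $\Lambda_1\subseteq\Lambda_0$ with $\mu(\Lambda_1)>1-\delta/4$, and $N_0$ with $\mu_{\xi(x)}(V^{u,i}(x,n,\rho_1))\le e^{-n(h_{\mu}^{i}(f)-\varepsilon)}$ for all $x\in\Lambda_1$ and $n\ge N_0$. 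Finally put $K:=\Lambda_1\cap G_g$ (so $\mu(K)>1-\delta$) and $\rho:=\rho_1/(2CL^{g})$.

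The core step is the geometric claim that $\xi(x)\cap B(x,n,\rho)\subseteq V^{u,i}(x,n,\rho_1)$ for every $x\in K$ and $n\ge1$. Let $y\in\xi(x)\cap B(x,n,\rho)$; since $x\in\Lambda_0$, the diameter bound on $\xi$ gives $d^{u,i}_x(x,y)<\rho_1/(2L^{g})$. The basic mechanism is that $f$ stretches any rectifiable path in an unstable leaf by at most the factor $L$, so $d^{u,i}_{f^{j+1}(x)}(f^{j+1}(x),f^{j+1}(y))\le L\,d^{u,i}_{f^{j}(x)}(f^{j}(x),f^{j}(y))$ for all $j$. Suppose $j_0\in[1,n)$ were the first index with $d^{u,i}_{f^{j_0}(x)}(f^{j_0}(x),f^{j_0}(y))\ge\rho_1$. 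If $j_0<g$, then $d^{u,i}_{f^{j_0}(x)}(\cdot,\cdot)\le L^{j_0}d^{u,i}_x(x,y)<\rho_1$, a contradiction. If $j_0\ge g$, use $x\in G_g$ to find $m$ with $j_0-g\le m\le j_0$ and $f^m(x)\in\Lambda_0$; by minimality of $j_0$ (if $m<j_0$) or one application of the stretching bound (if $m=j_0$), $d^{u,i}_{f^m(x)}(f^m(x),f^m(y))<L\rho_1<\rho_0$, hence $f^m(y)\in W^{u,i}_{\loc}(f^m(x))$ and the distortion bound on $\Lambda_0$ yields $d^{u,i}_{f^m(x)}(f^m(x),f^m(y))\le C\,d(f^m(x),f^m(y))<C\rho$. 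Stretching forward the at most $g$ steps from $m$ to $j_0$ gives $d^{u,i}_{f^{j_0}(x)}(f^{j_0}(x),f^{j_0}(y))\le L^{g}C\rho=\rho_1/2<\rho_1$, again a contradiction. Thus no such $j_0$ exists and $y\in V^{u,i}(x,n,\rho_1)$.

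Since $\mu_{\xi(x)}$ is carried by $\xi(x)$, for $x\in K$ and $n\ge N_0$ the claim gives
\[
\mu_{\xi(x)}\bigl(K\cap B(x,n,\rho)\bigr)=\mu_{\xi(x)}\bigl(K\cap B(x,n,\rho)\cap\xi(x)\bigr)\le\mu_{\xi(x)}\bigl(V^{u,i}(x,n,\rho_1)\bigr)\le e^{-n(h_{\mu}^{i}(f)-\varepsilon)} .
\]
Hence $-\frac1n\log\mu_{\xi(x)}(K\cap B(x,n,\rho))\ge h_{\mu}^{i}(f)-\varepsilon$ for $n\ge N_0$, and taking $\liminf_{n\to\infty}$ gives $\liminf_{n\to\infty}-\frac1n\log\mu_{\xi(x)}(K\cap B(x,n,\rho))\ge h_{\mu}^{i}(f)-\varepsilon$ for every $x\in K$, which is the asserted inequality.

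The step I expect to be the main obstacle is the geometric claim: the leaves $W^{u,i}$ are only immersed, so a far-away sheet of $W^{u,i}(x)$ can accompany the orbit of $x$ in the ambient metric without being $d^{u,i}$-close to $x$. Ruling this out is precisely the role of the two preparatory reductions --- the Pesin-uniform local picture on $\Lambda_0$ turns ambient closeness into leafwise closeness at regular times, and the bounded-gap return property of $G_g$ keeps the loss incurred between consecutive regular times to the fixed factor $L^{g}$ rather than to an exponentially growing one, so that a single scale $\rho_1$ (and $\rho=\rho_1/(2CL^{g})$) works for all $n$. A subsidiary point that must be arranged carefully is the order in which $\rho_0,C,g$, the refinement of $\xi$, and finally $\rho_1,\Lambda_1,N_0$ are fixed, so that the definition of $K$ and $\rho$ involves no circularity; the order above is consistent, since refining $\xi$ changes the conditional measures of the fixed sets $V^{u,i}(x,n,\rho_1)$ only by an $n$-independent factor.
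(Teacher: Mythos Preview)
The paper does not contain a proof of this proposition; it is quoted from \cite{Yang2024} and used as a black box, so there is no in-paper argument to compare yours against.

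Your overall plan and the inductive geometric argument in the second paragraph are sound, but the construction of the set $G_g$ is wrong, and this breaks the proof. You assert that Poincar\'e recurrence and Birkhoff's theorem give $\mu(G_g)>1-\delta/4$ for large $g$, where $G_g$ is the set of points whose forward orbit meets $\Lambda_0$ in every window of length $g+1$. Those theorems only say that almost every orbit visits $\Lambda_0$ with frequency $\mu(\Lambda_0)$; they say nothing about the gaps between visits. In fact, for an ergodic $\mu$ and any measurable $\Lambda_0$ with $0<\mu(\Lambda_0)<1$ whose first-return time is essentially unbounded (the generic situation, e.g.\ any cylinder in a Bernoulli shift), almost every orbit has arbitrarily long gaps, so $\mu(G_g)=0$ for every $g$. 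Since your scale $\rho=\rho_1/(2CL^{g})$ is built from a fixed finite $g$, the set $K=\Lambda_1\cap G_g$ is then null and the argument collapses.

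The obstruction is genuine: your forward-stretching bound loses an uncontrolled factor $L^{\text{gap}}$ between consecutive visits, so no single ambient scale $\rho$ can serve for all $n$ by that mechanism. A workable repair has to use the \emph{backward} exponential contraction along $W^{u,i}$ on the Pesin block (item~(d) of Theorem~\ref{Thm:Unstable}) rather than forward stretching: once $f^{m}(y)$ is placed in the local chart at some return time $m$, the bound propagates to all earlier $j\le m$ with no loss, and the last-return-time statement $m_n(x)/n\to1$ (Lemma~\ref{return-time} in the paper) makes the leftover window $[m_n,n)$ contribute only $e^{o(n)}$, which is invisible in the $\liminf$. One still has to handle the ``which sheet'' issue at return times, and this is exactly where the intersection with $K$ in the statement is used: both $x$ and $y$ lie in the Pesin block, and the continuous lamination of local unstable manifolds over $\Lambda_0$ forces $y\in W^{u,i}_{\loc}(x)$ once $d(x,y)$ is small. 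The circularity you flag around refining $\xi$ is a minor bookkeeping matter by comparison; the $G_g$ step is the real gap.
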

    By Proposition \ref{Prop:Two Balls}, for $\varepsilon>0$ one can choose a compact set $K$ with $\mu(K)>1/2$ and $\rho>0$ such that 
    \begin{equation}\label{eq:LimK}
    	h_{\mu}^{i}(f)\leq \liminf_{n\rightarrow +\infty} -\frac{1}{n}\log \mu_{\xi(x)}\left(K\cap B(x,n,\rho)\right)+\varepsilon,\quad \forall x\in K.
    \end{equation}
    Choose $x_0\in K$ and a measurable set $\Sigma \subset W^{u,i}_{\loc}(x_0)$ with $\mu_{\xi(x_0)}(K\cap \Sigma)>0$,  let $\omega(\cdot):=\mu_{\xi(x_0)}(\cdot \cap K\cap \Sigma)/\mu_{\xi(x_0)}( K\cap \Sigma)$, Luo and Yang showed the following result in \cite[Proposition 2.2]{Yang2024}.
    \begin{proposition}\label{Pro:unstable-entropy-partition}
    	Let $\varepsilon$, $\rho$, $K$, $\Sigma$ and $\omega$ be chosen as described above,
    	for any finite partition $\cal P$ satisfying ${\rm Diam}({\cal P})<\rho$ one has
    	$$h_{\mu}^{i}(f)\le\liminf_{n\to\infty}\frac{1}{n}H_{\omega}(\cP^n)+\varepsilon.$$   	
    \end{proposition}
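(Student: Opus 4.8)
The plan is to transfer the Bowen-ball decay estimate \eqref{eq:LimK} to static entropies of the fine partition $\cP$, using the elementary fact that dynamical cylinders of $\cP$ sit inside Bowen balls. First I would record that, since ${\rm Diam}(\cP)<\rho$, for every $n\ge 1$ and every $x$ the atom $\cP^n(x)$ containing $x$ satisfies $\cP^n(x)\subset B(x,n,\rho)$: if $y\in\cP^n(x)$ then $f^j(x)$ and $f^j(y)$ lie in a common atom of $\cP$, hence $d(f^j(x),f^j(y))<\rho$, for all $0\le j<n$. Put $c:=\mu_{\xi(x_0)}(K\cap\Sigma)>0$. Since $\omega$ is the normalization of $\mu_{\xi(x_0)}$ restricted to $K\cap\Sigma$, it is a probability measure supported on $\xi(x_0)\cap K\cap\Sigma$; in particular, for $\omega$-almost every $x$ one has $x\in K$ and $\xi(x)=\xi(x_0)$, so $\mu_{\xi(x)}=\mu_{\xi(x_0)}$. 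Combining these with $\cP^n(x)\cap K\cap\Sigma\subset K\cap B(x,n,\rho)$ and the monotonicity of $\mu_{\xi(x_0)}$, one gets for $\omega$-almost every $x$
\[
\omega(\cP^n(x))=\frac1c\,\mu_{\xi(x_0)}\big(\cP^n(x)\cap K\cap\Sigma\big)\ \le\ \frac1c\,\mu_{\xi(x)}\big(K\cap B(x,n,\rho)\big).
\]

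Next I would substitute this into $H_\omega(\cP^n)=\int-\log\omega(\cP^n(x))\,{\rm d}\omega(x)$ and divide by $n$, obtaining
\[
\frac1n H_\omega(\cP^n)\ \ge\ \int\Big(-\frac1n\log\mu_{\xi(x)}\big(K\cap B(x,n,\rho)\big)+\frac{\log c}{n}\Big)\,{\rm d}\omega(x).
\]
Here $-\frac1n\log\mu_{\xi(x)}(K\cap B(x,n,\rho))\ge 0$ since $\mu_{\xi(x)}$ is a probability measure, and $\frac{\log c}{n}$ is a fixed constant lower bound tending to $0$, so Fatou's lemma applies and gives
\[
\liminf_{n\to\infty}\frac1n H_\omega(\cP^n)\ \ge\ \int\liminf_{n\to\infty}\Big(-\frac1n\log\mu_{\xi(x)}\big(K\cap B(x,n,\rho)\big)\Big)\,{\rm d}\omega(x)\ \ge\ \int\big(h_\mu^i(f)-\varepsilon\big)\,{\rm d}\omega(x),
\]
the last step being \eqref{eq:LimK}, valid for every $x\in K$ and hence $\omega$-almost everywhere. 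As $\omega$ is a probability measure, the right-hand side equals $h_\mu^i(f)-\varepsilon$, and rearranging yields $h_\mu^i(f)\le\liminf_{n\to\infty}\frac1n H_\omega(\cP^n)+\varepsilon$.

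I do not anticipate a genuine obstacle: the argument is a bookkeeping bridge from the pointwise Bowen-ball estimate \eqref{eq:LimK} to partition entropies. The two points deserving a moment of care are the $\omega$-almost-everywhere identity $\mu_{\xi(x)}=\mu_{\xi(x_0)}$ — needed so that \eqref{eq:LimK} can be applied pointwise under the integral, and true because $\omega$ is concentrated on the single atom $\xi(x_0)$ — and the fact that the multiplicative constant $1/c$ becomes the additive term $\frac{\log c}{n}$ and disappears after dividing by $n$ and passing to the $\liminf$. This proposition is precisely what converts partition entropies of the non-invariant reference measure $\omega$ into a lower bound for the partial entropy $h_\mu^i(f)$, which is what one later combines with subadditive estimates such as Lemma \ref{Lem:11}.
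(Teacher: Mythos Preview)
The paper does not give its own proof of this proposition: it is quoted verbatim from \cite[Proposition~2.2]{Yang2024}. Your argument is correct and is exactly the natural way to deduce the statement from Proposition~\ref{Prop:Two Balls}: pass from Bowen balls to $\cP^n$-atoms via ${\rm Diam}(\cP)<\rho$, use that $\omega$ lives on the single atom $\xi(x_0)\cap K$ so that $\mu_{\xi(x)}=\mu_{\xi(x_0)}$ $\omega$-a.e., absorb the normalizing constant $c$ as $(\log c)/n\to 0$, and apply Fatou. There is nothing to compare against in the present paper, and your write-up would serve as a self-contained proof.
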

      
    \section{Auxiliary results}\label{Sec:Dominated}
    In this section, we consider an $f$-invariant compact subset $\Lambda$ admits a dominated splitting $T_{\Lambda}M=E\bigoplus F$, i.e.,  there exists $\lambda>0$ such that 
    $$\dfrac{m(D_xf|_{F(x)})}{ \|D_xf|_{E(x)}\|}> e^{\lambda},\, \forall x\in \Lambda.$$
    One can extend the dominated splitting $E\bigoplus F$ to an open neighborhood $U$ of $\Lambda$ such that 
    $$\dfrac{m(D_xf|_{F(x)})}{ \|D_xf|_{E(x)}\|}\geq e^{\lambda},\, \forall x\in \overline{U}.$$
    Denote by $\pi^{E}_x$ and $\pi^{F}_x$ the projections from $T_xM=E(x) \bigoplus F(x)$ to $E(x)$ and $F(x)$ respectively.
    By the continuity of the splitting, there exists a constant $C_{\rm ang}\geq 1$ such that 
    \begin{equation}\label{eq:Projec}
    	\max \{\|\pi^{E}_x\|, \|\pi^{F}_x\| \}\leq  C_{\rm ang},\, \forall x\in \overline{U}.
    \end{equation}
    
   For every $x$ and every $\rho>0$, let
   $$B_x(\rho):=\{v\in T_xM: \|v\|<\rho \},~R_x(\rho):=\{v\in T_xM: \max\{\|\pi^{E}_x(v)\|, \|\pi^{F}_x(v)\|\}<\rho \}.$$  
   Since $M$ is  compact, one can choose $\rho(M)>0$ and $r(M)>0$ such that for every $x\in M$
   \begin{enumerate}
   	\item[$ \bullet$]  $\exp_x: B_x(\rho(M)) \rightarrow M$ is a $C^{\infty}$ embedding;
   	\item[$ \bullet$]  $\exp_x(B_x(\rho(M)))\supset B(x,r(M))$;
   	\item[$ \bullet$]  $\|D_v(\exp_x)\|\leq 2,\, \forall v\in B_x(\rho(M))$ and $\|D_y(\exp_x^{-1})\|\leq 2,\, \forall y\in B(x,r(M))$.
   \end{enumerate}     
   We denote $F_x=\exp_{f(x)}^{-1}\circ f \circ \exp_x$, $F_x^i=F_{f^{i-1}x}\circ\cdots\circ F_{fx}\circ F_x$ and $F_x^{-i}:=(F^i_{f^{-i}(x)})^{-1}$ for each $i\ge 1$.         
   Since $f$ is $C^r$ smooth, the following properties hold:
   \begin{enumerate}
   	\item[$ \bullet$]  $F_x(0)=0$ and $D_0(F_x)=D_xf$;
   	\item[$ \bullet$]  for every $\eta>0$, there is a $\hat \varepsilon>0$ such that the map $F_x: R_x(\hat \varepsilon)\to M$ satisfies that 
   	$$ \Lip(F_x-D_xf)\leq \eta ~ \text{and} ~ \sup_{v\in R_x(\hat \varepsilon)}\|D_v(F_x)-D_0(F_x)\|\leq \eta.$$
   \end{enumerate}    
    For each $x\in \overline{U}$ and small $\rho>0$,	let  $F(x)(\rho):=\{v\in F(x): \|v\|<\rho\}$.
   
   \begin{proposition} \label{Prop: Domin}
   	There exists $\hat{\varepsilon}_1>0$, such that for every $x\in \Lambda$, $n>0$ and every $C^1$ map $\phi: {\rm Dom}(\phi)\rightarrow E(x)$ with 
   	\begin{enumerate}
   		\item[$ \bullet$]  $\phi(0)=0$, ${\rm Dom}(\phi)$ is an open subset of $F(x)(\hat{\varepsilon}_1)$ and $\Lip(\phi)\leq \frac{1}{3}$,
   	\end{enumerate}   
   	there is a family of $C^1$ maps $\left \{\varphi_i:  {\rm Dom}(\varphi_i)\rightarrow E(f^i(x))\right\}_{i=0}^{n-1}$ so that the following properties hold:
   	\begin{enumerate}
   		\item[(i)]  $\varphi_i(0)=0$, ${\rm Dom}(\varphi_i)$ is an open subset of $F(f^i(x))(\hat{\varepsilon}_1)$ and $\Lip(\varphi_i)\leq \frac{1}{3}$;
   		\item[(ii)]  $F_x^i(\graph(\phi)) \supset \graph(\varphi_i)\supset F_x^i\left( \bigcap_{j=0}^{i}F_{f^j(x)}^{-j} (R_{f^j(x)}(\hat{\varepsilon}_1))\cap \graph(\phi)\right),~\forall i=0,\cdots,n-1$.
   	\end{enumerate}   
   \end{proposition}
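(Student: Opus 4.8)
The plan is to run a non-autonomous graph transform along the orbit $x,f(x),\dots,f^{n-1}(x)$, all points of which lie in $\Lambda$, using the domination to keep the slope bounded by $\tfrac13$ at every step.

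\emph{Cone estimate and choice of constants.} For $z\in\overline U$ and $\kappa>0$ write $\cC^{\kappa}_z:=\{w\in T_zM:\|\pi^{E}_z w\|\le\kappa\|\pi^{F}_z w\|\}$. First I would fix a small $\eta>0$ and take the associated $\hat\varepsilon>0$ from the smoothness properties of $F_y$ (so $\Lip(F_y-D_yf)\le\eta$ and $\sup_{v\in R_y(\hat\varepsilon)}\|D_vF_y-D_0F_y\|\le\eta$ for all $y$), and set $m_0:=\inf_{y\in\Lambda}m(D_yf|_{F(y)})$, which is positive by compactness of $\Lambda$ and continuity. The crucial point is a quantitative cone-invariance: since $E\oplus F$ is $Df$-invariant over $\Lambda$, for $y\in\Lambda$ and $v\in\cC^{1/3}_y$ one has $D_yf(\pi^{E}_y v)\in E(f(y))$, $D_yf(\pi^{F}_y v)\in F(f(y))$, so $\|D_yf(\pi^{E}_y v)\|\le\|D_yf|_{E(y)}\|\cdot\tfrac13\|\pi^{F}_y v\|\le\tfrac13 e^{-\lambda}m(D_yf|_{F(y)})\|\pi^{F}_y v\|$ while $\|D_yf(\pi^{F}_y v)\|\ge m(D_yf|_{F(y)})\|\pi^{F}_y v\|$. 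Feeding this into the two smoothness bounds above, the projection bound \eqref{eq:Projec}, and $m(D_yf|_{F(y)})\ge m_0$, I would obtain a constant $\kappa_1=\kappa_1(\eta)$ with $\kappa_1\to\tfrac13 e^{-\lambda}<\tfrac13$ as $\eta\to0$ such that, for every $y\in\Lambda$,
\[
p,q\in R_y(\hat\varepsilon),\ p-q\in\cC^{1/3}_y\ \Longrightarrow\ F_y(p)-F_y(q)\in\cC^{\kappa_1}_{f(y)},
\]
\[
p\in R_y(\hat\varepsilon),\ \tau\in\cC^{1/3}_y\ \Longrightarrow\ D_pF_y\,\tau\in\cC^{\kappa_1}_{f(y)} .
\]
Now fix $\eta$ small enough that $\kappa_1\le\tfrac13$, and choose $\hat\varepsilon_1\in(0,\hat\varepsilon]$. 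I expect this uniform cone-invariance — pinning the slope bound to exactly $\tfrac13$ for all $y\in\Lambda$ simultaneously — together with the bookkeeping of the nested domains in (ii), to be the parts requiring the most care; the graph-transform machinery is otherwise standard.

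\emph{The graph transform.} Let $y\in\Lambda$ and let $\psi$ be $C^1$ with $\psi(0)=0$, ${\rm Dom}(\psi)$ open in $F(y)(\hat\varepsilon_1)$, and $\Lip(\psi)\le\tfrac13$. A point of $\graph(\psi)$ has $\pi^{F}$-component of norm $<\hat\varepsilon_1$ and $\pi^{E}$-component of norm $<\tfrac13\hat\varepsilon_1$, so $\graph(\psi)\subset R_y(\hat\varepsilon_1)\subset R_y(\hat\varepsilon)={\rm Dom}(F_y)$. Any two points of $\graph(\psi)$ differ by a vector in $\cC^{1/3}_y$, so by the first implication their images differ by a vector in $\cC^{\kappa_1}_{f(y)}$; in particular $\pi^{F}_{f(y)}$ is injective on $F_y(\graph(\psi))$ (equal $\pi^{F}_{f(y)}$ forces equal $\pi^{E}_{f(y)}$, hence equality as $F_y$ is injective), and expressing the $\pi^{E}_{f(y)}$-component as a function $\cT_y(\psi)$ of the $\pi^{F}_{f(y)}$-component along $F_y(\graph(\psi))$ gives a $\kappa_1$-Lipschitz map. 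Since a tangent vector of $F_y(\graph(\psi))$ at $F_y(v+\psi(v))$ has the form $D_{v+\psi(v)}F_y\,(w+D_v\psi\,w)$ with $w+D_v\psi\,w\in\cC^{1/3}_y$, the second implication puts all tangent spaces of $F_y(\graph(\psi))$ in $\cC^{\kappa_1}_{f(y)}$, so $\pi^{F}_{f(y)}$ restricts to an immersion on this $C^1$ submanifold and hence to a diffeomorphism onto an open subset of $F(f(y))$. Therefore $F_y(\graph(\psi))=\graph(\cT_y(\psi))$ with ${\rm Dom}(\cT_y(\psi))\subset F(f(y))$ open, $\cT_y(\psi)$ of class $C^1$, $\cT_y(\psi)(0)=0$ (because $F_y(0)=0$), and $\Lip(\cT_y(\psi))\le\kappa_1\le\tfrac13$.

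\emph{Induction on $i$.} Set $\varphi_0:=\phi$. Then (i) is the hypothesis on $\phi$, and the computation just made shows $\graph(\phi)\subset R_x(\hat\varepsilon_1)$, so the three sets in (ii) for $i=0$ all equal $\graph(\phi)$. Assume $\varphi_0,\dots,\varphi_i$ satisfy (i)–(ii). As above $\graph(\varphi_i)\subset R_{f^i(x)}(\hat\varepsilon)={\rm Dom}(F_{f^i(x)})$, so $\cT_{f^i(x)}(\varphi_i)$ is defined; let $\varphi_{i+1}$ be the restriction of $\cT_{f^i(x)}(\varphi_i)$ to the open set ${\rm Dom}(\cT_{f^i(x)}(\varphi_i))\cap F(f^{i+1}(x))(\hat\varepsilon_1)$, which contains $0$; then (i) holds at $i+1$. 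For the left inclusion in (ii): any $z\in\graph(\varphi_{i+1})\subset\graph(\cT_{f^i(x)}(\varphi_i))=F_{f^i(x)}(\graph(\varphi_i))$ equals $F_{f^i(x)}(z')$ with $z'\in\graph(\varphi_i)\subset F_x^i(\graph(\phi))$ by the inductive left inclusion, so $z'=F_x^i(y)$ and $z=F_x^{i+1}(y)$ for some $y\in\graph(\phi)$. For the right inclusion: if $y\in\bigcap_{j=0}^{i+1}F_{f^j(x)}^{-j}(R_{f^j(x)}(\hat\varepsilon_1))\cap\graph(\phi)$, then $y$ also lies in the analogous set with $j$ running only to $i$, so $F_x^i(y)\in\graph(\varphi_i)$ by the inductive right inclusion; since $\graph(\varphi_i)\subset{\rm Dom}(F_{f^i(x)})$, applying $F_{f^i(x)}$ gives $F_x^{i+1}(y)\in F_{f^i(x)}(\graph(\varphi_i))=\graph(\cT_{f^i(x)}(\varphi_i))$, while the constraint at $j=i+1$ forces $F_x^{i+1}(y)\in R_{f^{i+1}(x)}(\hat\varepsilon_1)$, so its $\pi^{F}$-component lies in ${\rm Dom}(\cT_{f^i(x)}(\varphi_i))\cap F(f^{i+1}(x))(\hat\varepsilon_1)={\rm Dom}(\varphi_{i+1})$ and therefore $F_x^{i+1}(y)\in\graph(\varphi_{i+1})$. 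This closes the induction and proves the proposition.
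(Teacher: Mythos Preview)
Your proof is correct and follows essentially the same approach as the paper: a one-step graph transform (your paragraph ``The graph transform'' corresponds to the paper's Lemma~\ref{Lem:Domin}, which it proves via explicit Lipschitz estimates on $T(u)=D_xf(u)+\pi^F_{f(x)}\!\circ h(u+\phi(u))$ rather than cone language) followed by the same induction, restricting at each step to the part of the image graph whose $F$-projection lies in $F(f^{i+1}(x))(\hat\varepsilon_1)$. The only cosmetic difference is that the paper packages the one-step estimate as a separate lemma and works directly with $h=F_y-D_yf$ instead of cones; your verification of the nested inclusions in (ii) is in fact more explicit than the paper's.
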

   Before we prove the above proposition, we first consider the case of a one-step iteration.
   \begin{lemma}\label{Lem:Domin}
   	There exists $\hat{\varepsilon}_1>0$ such that 
    for every $x\in \Lambda$ and every $C^1$ map $\phi: {\rm Dom}(\phi)\rightarrow E(x)$ with 
     $\phi(0)=0$, ${\rm Dom}(\phi)$ is an open subset of $F(x)(\hat{\varepsilon}_1)$ and $\Lip(\phi)\leq \frac{1}{3}$, there is a bounded open set $V\subset F(f(x))$ and a $C^1$ map $\Phi:V\rightarrow E(f(x))$ for which 
     $\graph(\Phi)=F_x(\graph(\phi))$ and $\Lip(\Phi)\leq \frac{1}{3}$.
   \end{lemma}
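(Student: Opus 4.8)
The plan is to perform a graph transform in the charts $F_x$, the contraction being supplied by the domination $\|D_xf|_{E(x)}\|\le e^{-\lambda}m(D_xf|_{F(x)})$, which holds on all of $\overline{U}$. First I would record the uniform constants available from compactness of $M$: since $f$ is a diffeomorphism, $c_0:=\inf_{x\in M}m(D_xf)>0$, so $m(D_xf|_{F(x)})\ge c_0$ for every $x\in\overline{U}$; recall also $\|\pi^E_x\|,\|\pi^F_x\|\le C_{\rm ang}$. Then I would fix $\eta>0$ small enough that
$$\tfrac{4}{3}C_{\rm ang}\eta<\tfrac12 c_0\qquad\text{and}\qquad\tfrac{16}{3}C_{\rm ang}\eta\le c_0\,(1-e^{-\lambda}),$$
and set $\hat\varepsilon_1:=\hat\varepsilon$, the radius furnished for this $\eta$ by the $C^1$-closeness property of $F_x$; note that $\eta$ and $\hat\varepsilon_1$ do not depend on $x\in\Lambda$.

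Given $\phi$ as in the statement, I would parametrise $\graph(\phi)$ by $\iota(w)=w+\phi(w)$ for $w\in\mathrm{Dom}(\phi)\subset F(x)(\hat\varepsilon_1)$. Since $\phi(0)=0$ and $\Lip(\phi)\le\tfrac13$, the components of $\iota(w)$ have norms $\|w\|<\hat\varepsilon_1$ and $\|\phi(w)\|\le\tfrac13\|w\|<\hat\varepsilon_1$, so $\graph(\phi)\subset R_x(\hat\varepsilon_1)$ and the bounds $\Lip(F_x-D_xf)\le\eta$ and $\|D_vF_x-D_xf\|\le\eta$ are available along $\graph(\phi)$. Write $L:=D_xf$ and $g:=F_x-L$, so that $g(0)=0$, $\Lip(g)\le\eta$ and $\|D_vg\|\le\eta$ on $R_x(\hat\varepsilon_1)$; since $L$ preserves the splitting I would introduce
$$h(w):=\pi^{F}_{f(x)}\big(F_x(\iota(w))\big)=L|_{F(x)}w+\pi^{F}_{f(x)}\big(g(\iota(w))\big),\qquad w\in\mathrm{Dom}(\phi).$$
Because $\iota$ is $\tfrac43$-Lipschitz (indeed $\|D_w\iota\|\le\tfrac43$) and $\|\pi^{F}_{f(x)}\|\le C_{\rm ang}$, the map $w\mapsto\pi^{F}_{f(x)}(g(\iota(w)))$ is $C^1$ with both its Lipschitz constant and the norm of its derivative at most $\tfrac43 C_{\rm ang}\eta<\tfrac12 m(L|_{F(x)})$. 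Hence $w\mapsto(L|_{F(x)})^{-1}h(w)$ is a $C^1$ perturbation of the identity, with invertible derivative and with the perturbing term a strict contraction; so $h$ is a $C^1$ diffeomorphism from $\mathrm{Dom}(\phi)$ onto a bounded open set $V\subset F(f(x))$, and $\Lip(h^{-1})\le\big(m(L|_{F(x)})-\tfrac43 C_{\rm ang}\eta\big)^{-1}$.

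I would then define $\Phi:V\to E(f(x))$ by $\Phi(u):=\pi^{E}_{f(x)}\big(F_x(\iota(h^{-1}(u)))\big)$. By construction $F_x(\iota(h^{-1}(u)))$ has $F(f(x))$-component $u$ and $E(f(x))$-component $\Phi(u)$, so $\graph(\Phi)=F_x(\graph(\phi))$, and $V$ is bounded and open, as required. From
$$\Phi(h(w))=\pi^{E}_{f(x)}\big(F_x(\iota(w))\big)=L|_{E(x)}\phi(w)+\pi^{E}_{f(x)}\big(g(\iota(w))\big)$$
I obtain $\Lip(\Phi\circ h)\le\tfrac13\|L|_{E(x)}\|+\tfrac43 C_{\rm ang}\eta$, and therefore
$$\Lip(\Phi)\le\frac{\tfrac13\|L|_{E(x)}\|+\tfrac43 C_{\rm ang}\eta}{\,m(L|_{F(x)})-\tfrac43 C_{\rm ang}\eta\,}\le\frac13,$$
since the last inequality amounts to $\tfrac{16}{3}C_{\rm ang}\eta\le m(L|_{F(x)})-\|L|_{E(x)}\|$, and domination gives $m(L|_{F(x)})-\|L|_{E(x)}\|\ge m(L|_{F(x)})(1-e^{-\lambda})\ge c_0(1-e^{-\lambda})$.

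I expect the only genuine difficulty to be bookkeeping: one must carry the non-orthogonality constant $C_{\rm ang}$ through every estimate and verify that the gain $e^{-\lambda}$ from the domination absorbs all error terms produced by the nonlinear part $g$, and that everything can be arranged uniformly in $x\in\Lambda$ — the uniformity resting on compactness of $M$ (uniform $c_0$, uniform $C_{\rm ang}$, and the uniform assignment $\eta\mapsto\hat\varepsilon$). The one structural ingredient that does the real work is that $D_xf$ respects the splitting with slope ratio at least $e^{\lambda}$, so that after pre-composition the Lipschitz constant of the graph is multiplied by (essentially) $e^{-\lambda}$, leaving room to reabsorb the nonlinear errors back into the bound $\tfrac13$.
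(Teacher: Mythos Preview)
Your proposal is correct and follows essentially the same graph-transform argument as the paper: both write $F_x=D_xf+(\text{small nonlinear part})$, show that the $F(f(x))$-projection of $F_x\circ(w\mapsto w+\phi(w))$ is a $C^1$ diffeomorphism onto an open set (your $h$ is the paper's $T$, your $g$ is the paper's $h$, your $\eta$ is the paper's $\delta$), define $\Phi$ via the $E(f(x))$-projection composed with this inverse, and then bound $\Lip(\Phi)$ by the same ratio, with the domination $\|D_xf|_{E(x)}\|\le e^{-\lambda}m(D_xf|_{F(x)})$ absorbing the nonlinear errors. The only differences are cosmetic: your constants are marginally sharper (you carry $\tfrac43$ where the paper rounds to $2$), and you invoke the perturbation-of-identity argument for invertibility where the paper checks injectivity by the same Lipschitz estimate directly.
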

   	\begin{proof}
   	Recall that the constant $C_{\rm ang}$ is chosen in \eqref{eq:Projec}, choose a number $\delta>0$ small enough such that 
   	$$0<\delta<\dfrac{1-e^{-\lambda}}{20C_{\rm ang}}\min_{x\in M}m(D_xf).$$
   	Then, there is $\hat{\varepsilon}_1>0$ such that $F_x: R_x(\hat{\varepsilon}_1)\to M$ satisfies
   	$$ \Lip(F_x-D_xf)\leq \delta ~ \text{and} ~ \sup_{v\in R_x(\hat{\varepsilon}_1)}\|D_v(F_x)-D_0(F_x)\|\leq \delta.$$
   	Let $h=F_x-D_xf$, then we have $\Lip(h)\leq \delta$. For every $u\in {\rm Dom}(\phi)$, one has that 
   	\begin{align*}
   		&\pi_{f(x)}^F (F_x(u+\phi(u)))=D_xf(u)+\pi_{f(x)}^{F} \circ h(u+\phi(u)) \\
   		&\pi_{f(x)}^E (F_x(u+\phi(u)))=D_xf(\phi(u))+\pi_{f(x)}^{E} \circ h(u+\phi(u)).
   	\end{align*}

   	 Define a map $T: {\rm Dom}(\phi)\rightarrow F(f(x))$ as $u\mapsto D_xf(u)+\pi_{f(x)}^{F} \circ h(u+\phi(u))$, one can show  that $T$ is injective. In fact, 
     for every $u_1,u_2\in {\rm Dom}(\phi)$ and $u_1\neq u_2$, we have that 
    \begin{align*}
    \|Tu_1-Tu_2\|  = &\|D_xf(u_1)-D_xf(u_2)+\pi_{f(x)}^{F} \circ h(u_1+\phi(u_1))-\pi_{f(x)}^{F} \circ h(u_2+\phi(u_2))\|\\
    	\geq &\left(m(D_xf|_{F(x)})-\|\pi_{f(x)}^F\|\cdot{\rm Lip}(h)\cdot(1+ {\rm Lip}(\phi)) \right)\|u_1-u_2\|\\
    	>&\frac{9}{10} \min_{x\in M}m(D_xf) \|u_1-u_2\|>0.
    \end{align*}
    Let $V=T({\rm Dom}(\phi))$, then one has that $V$ is a bounded open subset of $F(f(x))$ and $T:{\rm Dom}(\phi)\rightarrow V$ is a diffeomorphism. Define
    $\Phi: V \rightarrow E(f(x))$ as 
    $$\Phi(v)=D_xf(\phi(T^{-1}v))+\pi_{f(x)}^{E} \circ h(T^{-1}v+\phi(T^{-1}v)).$$
    Clearly, $\Phi: V\rightarrow E(f(x))$ is a $C^1$ map that satisfies $\graph(\Phi)=F_x(\graph(\phi))$. 
    
    To complete the proof of the lemma, we show that $\Lip(\Phi)\leq \frac{1}{3}$. For every $v_1,v_2\in V$, put $u_i=T^{-1}v_i,~i=1,2$. Then, one has
    \begin{align*}
    	\frac{\|\Phi(v_1)-\Phi(v_2)\|}{\|v_1-v_2\|}=\dfrac{\|D_xf(\phi(u_1))-D_xf(\phi(u_2))+\pi_{f(x)}^{E} \circ h(u_1+\phi(u_1))-\pi_{f(x)}^{E} \circ h(u_2+\phi(u_2))\|}{\|D_xf(u_1)-D_xf(u_2)+\pi_{f(x)}^{F} \circ h(u_1+\phi(u_1))-\pi_{f(x)}^{F} \circ h(u_2+\phi(u_2))\|}.
    \end{align*}
    Note that 
    \begin{align*}
    	&\|D_xf(u_1)-D_xf(u_2)+\pi_{f(x)}^{F} \circ h(u_1+\phi(u_1))-\pi_{f(x)}^{F} \circ h(u_2+\phi(u_2))\|\\
    	\geq &\left(m(D_xf|_{F(x)})-2C_{\rm ang}\delta \right)\|u_1-u_2\|
    \end{align*}
    and 
    \begin{align*}
    &\|D_xf(\phi(u_1))-D_xf(\phi(u_2))+\pi_{f(x)}^{E} \circ h(u_1+\phi(u_1))-\pi_{f(x)}^{E} \circ h(u_2+\phi(u_2))\|\\
    	\leq &\left(\frac{1}{3}\|D_xf|_{E(x)}\|+2C_{\rm ang}\delta \right)\|u_1-u_2\|\\
    	\leq  &\left(\frac{e^{-\lambda}}{3}m(D_xf|_{F(x)})+2C_{\rm ang}\delta \right)\|u_1-u_2\|
    \end{align*}
    where the last inequality using the dominated splitting property. 
    Consequently, on can show  that $\Lip(\Phi)\leq \frac{1}{3}$ by the choice of $\delta$. This completes the proof of this Lemma.
   \end{proof}
   
   Now, we prove Proposition \ref{Prop: Domin}.
   \begin{proof}[Proof of Proposition \ref{Prop: Domin}]
   Assume that $x\in \Lambda$, $n>0$ and $\phi:  {\rm Dom}(\phi)\rightarrow E(x)$ is a $C^1$ map  satisfying that 
   	\begin{enumerate}
   		\item[$ \bullet$]  $\phi(0)=0$, ${\rm Dom}(\phi)$ is an open subset of $F(x)(\hat{\varepsilon}_1)$ and $\Lip(\phi)\leq \frac{1}{3}$.
   	\end{enumerate}   
   	We first let $\varphi_0=\phi$ and ${\rm Dom}(\varphi_0)={\rm Dom}(\phi)$. 
   	By Lemma \ref{Lem:Domin}, there exist an open set $V(\varphi_0)\subset F(f(x))$ and a $C^1$ map $\Phi(\varphi_0):V(\varphi_0)\rightarrow E(f(x))$ such that   $\graph(\Phi(\varphi_0))=F_x(\graph(\varphi_0))$ and $\Lip(\Phi(\varphi_0))\leq \frac{1}{3}$.
   	
   Let 
   	$\varphi_1:=\Phi(\varphi_0)|_{{\rm Dom}(\varphi_1)}:{\rm Dom}(\varphi_1)\rightarrow E(f(x))$, where ${\rm Dom}(\varphi_1)=V(\varphi_0)\cap F(f(x))(\hat{\varepsilon}_1)$  is an open subset of $F(f(x))(\hat{\varepsilon}_1)$. Clearly, we have that $\Lip(\varphi_1)\leq \frac{1}{3}$ and
   	$$F_x(\graph(\varphi_0))\supset \graph(\varphi_1)\supset F_x(F_{f(x)}^{-1} (R_{f(x)}(\hat{\varepsilon}_1)) \cap  \graph(\varphi_0) ).$$

   Repeat the above process, for every $1\leq i <n$ there exist an open subset ${\rm Dom}(\varphi_i) \subset F(f^i(x))(\hat{\varepsilon}_1)$ and a $C^1$ map $\varphi_i:{\rm Dom}(\varphi_i)\rightarrow E(f^i(x))$ satisfying that $\Lip(\varphi_i)\leq \frac{1}{3}$ and
   $$F_{f^{i-1}(x)}(\graph(\varphi_{i-1}))\supset \graph(\varphi_i)\supset F_{f^{i-1}(x)}(F_{f^i(x)}^{-1} (R_{f^i(x)}(\hat{\varepsilon}_1)) \cap  \graph(\varphi_{i-1}) ).$$
   Therefore, we have that 
    $$F_x^i(\graph(\phi)) \supset \graph(\varphi_i)\supset F_x^i\left( \bigcap_{j=0}^{i}F_{f^j(x)}^{-j} (R_{f^j(x)}(\hat{\varepsilon}_1))\cap \graph(\phi)\right),~\forall i=1,\cdots,n-1.$$
    This completes the proof of Proposition \ref{Prop: Domin}.
   \end{proof}
   
   The following proposition depends on the continuity of the splitting $T_{\overline{U}}M=E\bigoplus F$.
   \begin{proposition}\label{Prop:Var}
   	There exist $\varepsilon_2>0$ and $\hat{\delta}_1>0$ such that for every $x\in \Lambda$, every $y \in B(x,\varepsilon_2)$, every $0<\hat{\delta}<\hat{\delta}_1$ and every $C^1$ map 
   	$\psi_y: F(y)(2\hat{\delta})\rightarrow E(y)$ with $\Lip(\psi_y)\leq \frac{1}{4}$ and $\psi_y(0)=0$, there exists a $C^1$ map 
   	$$\psi_x^y:~\{\pi_x^F\circ\exp_x^{-1}(y)+u:u\in F(x)(\hat \delta) \} \rightarrow E(x)$$
   	such that $\mathrm{Lip}(\psi_x^y)\leq \frac{1}{3}$ and $\exp_x({\rm graph}(\psi_x^y))\subset\exp_y({\rm graph(\psi_y)})$.
   \end{proposition}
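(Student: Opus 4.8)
The plan is to carry the graph $N:=\exp_y(\graph(\psi_y))$ into the chart at $x$ and then realise a piece of it as a graph over $F(x)$ by a quantitative graph transform. Put $p:=\exp_x^{-1}(y)$ and $G:=\exp_x^{-1}\circ\exp_y$; for $\varepsilon_2$ and $\rho_0$ small enough that $\exp_y(B_y(\rho_0))\subset\exp_x(B_x(\rho(M)))$, $G$ is a $C^\infty$ diffeomorphism of $B_y(\rho_0)$ onto an open subset of $T_xM$ with $G(0)=p$ and $\|p\|\le 2\,d(x,y)\le 2\varepsilon_2$. Two uniform ingredients will be used. First, by the smoothness of $\exp$ and the compactness of $M$, $G\to\mathrm{id}$ in the $C^1$ topology on $B_y(\rho_0)$ as $d(x,y)\to 0$, uniformly in $x$: for every $\eta>0$ there are $\varepsilon_2,\rho_0>0$ with $\|D_vG-P_{x,y}\|\le\eta$ for all $x\in\Lambda$, $y\in B(x,\varepsilon_2)$ and $v\in B_y(\rho_0)$, where $P_{x,y}\colon T_yM\to T_xM$ is parallel transport along the short geodesic (an isometry, $P_{x,x}=\mathrm{id}$, jointly continuous in $(x,y)$). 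Second, by the uniform continuity of the splitting on the compact set $\overline{U}$, the subspaces $E(y),F(y)$ are $\eta$-close to $E(x),F(x)$ for $d(x,y)$ small, so $P_{x,y}(F(y))$ and $P_{x,y}(E(y))$ are $O(\eta)$-close to $F(x)$ and $E(x)$; we also keep in mind the uniform bound $\max\{\|\pi^{E}_x\|,\|\pi^{F}_x\|\}\le C_{\rm ang}$ of \eqref{eq:Projec}. A single smallness parameter $\eta$, fixed at the end, will determine $\varepsilon_2$ and $\hat{\delta}_1$.

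Next I parametrise $\exp_x^{-1}(N)$. For $v\in F(y)(2\hat{\delta})$ set $\beta(v):=G(v+\psi_y(v))$ and decompose $\beta=\beta_F+\beta_E$ with $\beta_F:=\pi^{F}_x\circ\beta$, $\beta_E:=\pi^{E}_x\circ\beta$. Since $\psi_y(0)=0$ and $\Lip(\psi_y)\le\frac14$ we have $\|v+\psi_y(v)\|\le\frac52\hat{\delta}$, so $\beta$ is a well-defined $C^1$ map once $\hat{\delta}_1$ is small, and $\beta(F(y)(2\hat{\delta}))=\exp_x^{-1}(N)$. Differentiating, $D_v\beta=D_{v+\psi_y(v)}G\circ(\mathrm{id}_{F(y)}+D_v\psi_y)$ with $\|D_v\psi_y\|\le\frac14$. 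Using the two ingredients one checks, for all admissible data and every $v\in F(y)(2\hat{\delta})$: $\pi^{F}_x\circ D_vG|_{F(y)}$ is $O(\eta)$-close to the fixed isomorphism $A:=\pi^{F}_x\circ P_{x,y}|_{F(y)}\colon F(y)\to F(x)$, whose co-norm is $\ge 1-O(\eta)$; $\pi^{F}_x\circ D_vG\circ D_v\psi_y$ has norm $O(\eta)$, because $D_v\psi_y$ has image in $E(y)$ while $D_vG$ nearly maps $E(y)$ into $E(x)=\ker\pi^{F}_x$; similarly $\pi^{E}_x\circ D_vG|_{F(y)}$ has norm $O(\eta)$; and $\pi^{E}_x\circ D_vG\circ D_v\psi_y$ has norm $\le\frac14+O(\eta)$. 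Hence, choosing $\eta$ small: (a) $D_v\beta_F$ is $O(\eta)$-close to $A$, so on the convex set $F(y)(2\hat{\delta})$ one has $\|\beta_F(v_1)-\beta_F(v_2)\|\ge\frac23\|v_1-v_2\|$; and (b) $D_v\beta_F$ is a linear isomorphism with $\|(D_v\beta_F)^{-1}\|\le\frac32$ and $\|D_v\beta_E\|\cdot\|(D_v\beta_F)^{-1}\|\le\frac13$.

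Finally I run the graph transform. By (a), $\beta_F$ is injective, and by (b) and the inverse function theorem it is a $C^1$ diffeomorphism onto an open neighbourhood of $\beta_F(0)=\pi^{F}_x(p)$. A Banach fixed point argument (to solve $\beta_F(v)=q$ iterate $v\mapsto v-(D_0\beta_F)^{-1}(\beta_F(v)-q)$, which is a contraction on a suitable ball since $D_0\beta_F$ and $D_v\beta_F$ are both $O(\eta)$-close to $A$) upgrades this to $\beta_F(F(y)(2\hat{\delta}))\supset B(\pi^{F}_x(p),(1-O(\eta))\cdot 2\hat{\delta})$, a ball whose radius exceeds $\hat{\delta}$ once $\eta$ is small; thus $\beta_F(F(y)(2\hat{\delta}))$ contains $\{\pi^{F}_x\circ\exp_x^{-1}(y)+u:u\in F(x)(\hat{\delta})\}$. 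Define $\psi_x^y:=\beta_E\circ\beta_F^{-1}$ on this last set; it is $C^1$, and by the chain rule together with (b), $\|D\psi_x^y\|\le\frac13$, whence $\Lip(\psi_x^y)\le\frac13$ on this convex domain. Lastly, if $w=\beta_F(v)$ lies in the domain then $w+\psi_x^y(w)=\beta_F(v)+\beta_E(v)=\beta(v)=\exp_x^{-1}(\exp_y(v+\psi_y(v)))$, so $\exp_x(\graph(\psi_x^y))\subset\exp_y(\graph(\psi_y))$, as required.

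The one genuinely non-routine point is the pair of uniform estimates isolated at the start: that $\exp_x^{-1}\circ\exp_y$ is $C^1$-close to the identity, and that the splitting at $y$ is close to that at $x$, both uniformly over $x\in\Lambda$ as $d(x,y)\to 0$. The first rests on the smoothness of the Riemannian metric together with the compactness of $\Lambda$; the second is precisely the (uniform) continuity of the dominated splitting on $\overline{U}$. Everything afterwards is a routine graph-transform/implicit-function computation, with the only care being the bookkeeping of the $O(\eta)$ terms so that the final Lipschitz bound stays below $\frac13$ and the image of $\beta_F$ still covers the $\hat{\delta}$-ball.
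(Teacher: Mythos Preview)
Your argument is correct and follows essentially the same graph-transform strategy as the paper: both pull the graph through the chart change $G=\exp_x^{-1}\circ\exp_y$, show that the $F(x)$-component of the resulting parametrisation is a diffeomorphism onto a region containing the required $\hat\delta$-ball, and read off the Lipschitz bound for $\psi_x^y=\beta_E\circ\beta_F^{-1}$ from the same four block estimates on $\pi_x^{E/F}\circ DG|_{E(y)/F(y)}$. The only cosmetic differences are that the paper linearises $G$ at $0$ (writing $G=A_{x,y}+h_{x,y}$ with $\Lip(h_{x,y})\le\eta$) and argues Lipschitz bounds directly at pairs of points, whereas you bound $D_vG$ against parallel transport and integrate; you are also more explicit (via the contraction argument) about the surjectivity $\beta_F(F(y)(2\hat\delta))\supset\{\pi_x^F\exp_x^{-1}(y)+u:u\in F(x)(\hat\delta)\}$, which the paper simply asserts.
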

   \begin{proof}
   	Since $U$ is an open neighborhood of $\Lambda$, there exists $\varepsilon_{0}>0$  such that $B(\Lambda,\varepsilon_{0})\subset U$. 
   	Then, for every $x\in\Lambda$, the splitting $T_yM=E(y)\bigoplus F(y)$ is well-defined  for each $y \in B(x,\varepsilon_0)$.
   	Choose $\hat{\delta}_0>0$ such that $\exp_x^{-1}\circ\exp_y: R_y(2\hat{\delta}_0)\rightarrow B_x(\rho(M))$ is well-defined for every $x\in \Lambda$ and every $y \in B(x,\varepsilon_2)$.
   	
   	We denote $A_{x,y}=D_{0}(\exp_x^{-1} \circ \exp_y)$ and $h_{x,y}=\exp_x^{-1} \circ \exp_y-D_{0}(\exp_x^{-1} \circ \exp_y)$. 
   	Since the splitting $T_{\overline{U}}M=E\bigoplus F$ is continuous, for every $\eta>0$, there exist $\varepsilon>0$ and $\hat \delta>0$ such that for any $x\in \Lambda$ and any $y \in B(x,\varepsilon)$, the map $\exp_x^{-1}\circ\exp_y: R_y(2\hat{\delta})\rightarrow B_x(\rho(M))$ restricted on $R_y(2\hat{\delta})$ satisfies that
   	\begin{equation}\label{eq:Exp}
   		\begin{aligned}
   				&m(\pi_{x}^{F}|_{A_{x,y}F(y)})\geq 1-\eta,~\|\pi_{x}^{F}|_{A_{x,y}E(y)}\|\leq \eta,~{\rm Lip}(h_{x,y})\leq \eta, \\
   				&\text{and}~\|\pi_{x}^{E}|_{A_{x,y}E(y)} \|\leq 1+\eta,~\|\pi_{x}^{E}|_{A_{x,y}F(y)}\|\leq \eta.
   		\end{aligned} 		
   	\end{equation}
   	
   	We now fix  $0<\eta<100^{-1}C_{\rm ang}^{-1}$, and choose $\varepsilon_2$ and $\hat{\delta}_1$ satisfying \eqref{eq:Exp}.
   	For $x\in \Lambda$, $y \in B(x,\varepsilon_2)$ and $\hat{\delta}<\hat{\delta}_1$, let
   	$$D_x^y:=\{\exp_x^{-1}\circ\exp_y(v+\psi_y(v)):v \in F(y)(2\hat \delta)\}\subset T_xM.$$
   	We first prove that $\pi_x^F$ is invertible when restricted to $D_x^y$. 
   	For every two distinct points $u_1, u_2\in D_x^y$, take $v_1, v_2\in F(y)(2\hat \delta)$ such that $v_1\neq v_2$ and $u_i=\exp_x^{-1}\circ\exp_y(v_i+\psi_y(v_i))\in D_x^y,~i=1,2$. 
   	By the choice of $\eta$, we have that
    \begin{equation}\label{lower-bound}
   	\begin{aligned}
   		&\|\pi^F_x(u_1-u_2)\|\\
        &\geq \|\pi_x^F A_{x,y}(v_1-v_2)\|-\|\pi_x^F A_{x,y}(\psi_y (v_1)-\psi_y (v_2))\|-\|\pi_x^F\| \cdot {\rm Lip}(h_{x,y}) \cdot 2\|v_1-v_2\| \\
   		                                      &\geq \left(m(\pi_{x}^{F}|_{A_{x,y}F(y)})- \frac{1}{4}\|\pi_{x}^{F}|_{A_{x,y}E(y)}\|-2\|\pi_x^F\| \cdot {\rm Lip}(h_{x,y})\right)\cdot \|v_1-v_2\|\\
   		                                      &\geq \left(1-4C_{\rm ang}\eta \right)\cdot \|v_1-v_2\| >\frac{4}{5} \|v_1-v_2\|>0.
   	\end{aligned}
    \end{equation}
   	This implies that $\pi_x^F$ is injective on $D_x^y$. 
   	Moreover, we have $\pi_x^F(D_x^y)\supset \{\pi_x^F\circ\exp_x^{-1}(y)+u:u\in F(x)(\hat \delta) \}.$
   	We now define $\psi_x^y$ on $\{\pi_x^F\circ\exp_x^{-1}(y)+v:v\in F(x)(\hat \delta) \}$ by $\psi_x^y(v)=\pi_x^E \circ (\pi_x^F|_{D_x^y})^{-1}(v)$.
   	Then, we have $\exp_x({\rm graph}(\psi_x^y))\subset\exp_y({\rm graph(\psi_y)})$.
   	
   	To complete the proof, we show that $\Lip(\psi_x^y)\leq \frac{1}{3}$. 
   	For every two distinct points $u_1, u_2 \in D_x^y$, take $v_1, v_2\in F(y)(2\hat \delta)$ such that $v_1\neq v_2$ and $u_i=\exp_x^{-1}\circ\exp_y(v_i+\psi_y(v_i))\in D_x^y,~i=1,2$. 
   	Note that 
   	\begin{align*}
   	\|\pi^E_x(u_1-u_2)\|&\leq \|\pi_x^E A_{x,y}(v_1-v_2)\|+\|\pi_x^E A_{x,y}(\psi_y (v_1)-\psi_y (v_2))\|+\|\pi_x^E\| \cdot {\rm Lip}(h_{x,y}) \cdot 2\|v_1-v_2\| \\
   	                                      &\leq \frac{1+15C_{\rm ang}\eta}{4}\|v_1-v_2\| 
   	\end{align*}   	
   	As in the proof of \eqref{lower-bound}, we have that  
   	  $$	\| \pi^F_x(u_1-u_2) \| \geq (1-4C_{\rm ang}\eta) \cdot \|v_1-v_2\|.$$
   	Consequently, one can show that  $\Lip(\psi_x^y)\leq \frac{1+15C_{\rm ang}\eta}{4 (1-4C_{\rm ang}\eta)}\leq  \frac{1}{3}$ by the choice of $\eta$. 
   	This completes the proof of the proposition. 
   	 \end{proof}
   	 
   	 The next result is trivial. Given $\hat{\delta}>0$, for each $x\in \Lambda$ and every $u\in F(x)$, let  $F(x)(u,\hat{\delta}):=\{v \in F(x): \|v-u\|< \hat{\delta} \}$.
   	 \begin{proposition}\label{Prop:Cover}
   	 	There exists $C_{\dim}>0$, such that for every $x\in \Lambda$ and every $0<\hat{\delta}<\hat{\varepsilon}$, there exists a family of subsets $ \{F(x)(u_i,\hat{\delta})\}_{i=1}^{M}$ satisfying that
   	 \begin{enumerate}
   	 	\item [(a)]  $\displaystyle{F(x)(\hat \varepsilon)\subset \bigcup_{i=1}^{M} F(x)(u_i,\hat{\delta})}$ and $u_i \in F(x)(\hat \varepsilon)$;
   	 	\item [(b)]  $\displaystyle{M\leq C_{\dim} \cdot \lceil \dfrac{\hat{\varepsilon}}{\hat{\delta}}\rceil^{\dim F}}$, where $\lceil \beta \rceil$ denotes the smallest integer that is larger than $\beta$.
   	 \end{enumerate}
   	 \end{proposition}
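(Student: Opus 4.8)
The plan is to reduce the statement to the standard volumetric covering estimate in Euclidean space. Since the norm on each fibre $F(x)$ is the one induced by the Riemannian metric on $M$, the pair $(F(x),\|\cdot\|)$ is a $(\dim F)$-dimensional inner product space, hence linearly isometric to $(\mathbb{R}^{\dim F},\|\cdot\|_2)$. Consequently it suffices to prove the following $x$-independent claim: there is a constant $C_d>0$, depending only on $d=\dim F$, such that any Euclidean ball of radius $\hat{\varepsilon}$ in $\mathbb{R}^d$ can be covered by at most $C_d\lceil\hat{\varepsilon}/\hat{\delta}\rceil^{d}$ balls of radius $\hat{\delta}$ centred inside it, whenever $0<\hat{\delta}<\hat{\varepsilon}$. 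Pulling this back through the isometry and setting $C_{\dim}:=C_{\dim F}$ yields the proposition with a constant uniform in $x\in\Lambda$.

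First I would fix $x$ and choose a maximal $\hat{\delta}$-separated subset $\{u_1,\dots,u_M\}$ of $F(x)(\hat{\varepsilon})$, i.e. a maximal subset with $\|u_i-u_j\|\ge\hat{\delta}$ for all $i\ne j$ (finiteness is not an issue, as the count below makes it explicit). By maximality, every $v\in F(x)(\hat{\varepsilon})$ lies within distance $\hat{\delta}$ of some $u_i$ — otherwise $\{u_1,\dots,u_M,v\}$ would still be $\hat{\delta}$-separated, contradicting maximality — so $F(x)(\hat{\varepsilon})\subset\bigcup_{i=1}^{M}F(x)(u_i,\hat{\delta})$, and $u_i\in F(x)(\hat{\varepsilon})$ by construction. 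This establishes $(a)$.

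It remains to bound $M$. The balls $F(x)(u_i,\hat{\delta}/2)$, $i=1,\dots,M$, are pairwise disjoint by $\hat{\delta}$-separation, and since $\|u_i\|<\hat{\varepsilon}$ and $\hat{\delta}<\hat{\varepsilon}$ they are all contained in $F(x)(2\hat{\varepsilon})$. Comparing Lebesgue volumes in the $d$-dimensional Euclidean space $F(x)$ — writing $\omega_d$ for the volume of the unit ball — gives $M\,\omega_d(\hat{\delta}/2)^{d}\le\omega_d(2\hat{\varepsilon})^{d}$, hence $M\le(4\hat{\varepsilon}/\hat{\delta})^{d}\le 4^{d}\lceil\hat{\varepsilon}/\hat{\delta}\rceil^{d}$, so the proposition holds with $C_{\dim}=4^{\dim F}$. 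There is no serious obstacle here; the only point worth a word of care is the uniformity of $C_{\dim}$ in $x$, which is exactly why one first passes to the model space $\mathbb{R}^{\dim F}$: all the fibres $F(x)$ are mutually isometric Euclidean spaces of the same dimension, so the single constant $4^{\dim F}$ works simultaneously for every $x\in\Lambda$.
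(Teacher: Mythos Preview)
Your argument is correct: the maximal $\hat\delta$-separated set plus the volume comparison is exactly the standard way to prove such a covering bound, and your observation that all fibres $(F(x),\|\cdot\|)$ are isometric to $\mathbb{R}^{\dim F}$ is what makes $C_{\dim}$ independent of $x$. The paper itself simply declares the result ``trivial'' and gives no proof, so there is nothing to compare against beyond noting that your write-up is precisely the argument the authors have in mind.
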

   	 
   	 \section{Proof of main Theorems}\label{Sec:PofBC}	 
   	 We choose small positive constants $\varepsilon_{f}>0$ and $\hat{\varepsilon}_f>0$ such that 
   	 \begin{equation}\label{eq:epsilonf}
   	 	\varepsilon_{f}<\varepsilon_2, \quad \hat{\varepsilon}_f<\min\{\hat{\varepsilon}_1,\hat{\delta}_1\} \quad \text{and} \quad \exp_x^{-1}B(x,2\varepsilon_f)\subset R_x(\hat{\varepsilon}_f),
   	\end{equation}
   	 where $\hat{\varepsilon}_1$ is chosen as in Proposition \ref{Prop: Domin}, $\varepsilon_2$ and $\hat{\delta}_1$ are chosen as in Proposition \ref{Prop:Var}.
   	 
   	Given an $f$-invariant ergodic measure $\nu$, by Oseledec's theorem there exists a positive integer $t(\nu)$ which is smaller than the dimension of the manifold $M$ such that 
   	 the Oseledec splitting $E^1 \bigoplus\cdots\bigoplus E^{t(\nu)}$ and Lyapunov exponents $\lambda_{1}(\nu)>\cdots>\lambda_{t(\nu)}(\nu)$ are well-defined $\nu$-almost everywhere.
   	 
   	 Assume that $\nu$ is supported on $\Lambda$ with $\lambda_F^{\min}(\nu)>0$.
   	 By the uniqueness of the Oseledec splitting, there exists $i(\nu)$ with $\lambda^{i(\nu)}(\nu)>0$, $E^{u,i(\nu)}=F$ and $E^{s,i(\nu)}=E$.
   	 Then, we denote $h_{\nu}^{F}(f):=h_{\nu}^{i(\nu)}(f)$ (recall that $E^{u/s,i(\nu)}$ is defined in \eqref{eq:SB} and $h_{\nu}^{i(\nu)}(f)$ is defined in \eqref{eq:LY}). 
     
   	 \begin{theorem}\label{Prop:Key} Let $f:M\rightarrow M$ be a $C^r\,(r>1)$ diffeomorphism  on a compact Riemannian  manifold, and let $\Lambda\subset M$ be an $f$-invariant compact subset that admits a dominated splitting $T_{\Lambda}M=E \bigoplus F$. 
   	 	Assume that $\nu$ is an $f$-invariant ergodic measure supported on $\Lambda$ with $\lambda_F^{\min}(\nu)>0$, and $\cQ$ is a finite partition with $\nu(\partial \cQ)=0$ and $\diam(\cQ)<\varepsilon_{f}$. 
        Then we have  that 
   	 	$$h_{\nu}^{F}(f)\leq h_{\nu}(f,\cQ).$$
   	 \end{theorem}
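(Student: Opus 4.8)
The plan is to compare the partial entropy $h_\nu^F(f)$ along the unstable manifold $W^{u,i(\nu)}$ with the entropy $h_\nu(f,\mathcal{Q})$ of a partition of small diameter, using Proposition \ref{Pro:unstable-entropy-partition} to replace $h_\nu^F(f)$ by a limit of normalized static entropies $\frac{1}{n}H_\omega(\mathcal{Q}^n)$ computed on a small piece $\Sigma$ of a local unstable leaf, and then controlling how the dynamical refinement $\mathcal{Q}^n$ restricted to such a leaf compares with the refinement computed on the whole manifold. Concretely, I would first fix $\varepsilon>0$ and invoke Proposition \ref{Prop:Two Balls} and the discussion following it to produce a compact set $K$ with $\nu(K)>1/2$, a radius $\rho>0$, a point $x_0\in K$, a measurable $\Sigma\subset W^{u,i(\nu)}_{\loc}(x_0)$ with positive conditional mass, and the associated probability $\omega$ supported on $K\cap\Sigma$, so that by Proposition \ref{Pro:unstable-entropy-partition}, for any finite partition with diameter $<\rho$ one has $h_\nu^F(f)\le \liminf_n \frac{1}{n}H_\omega(\mathcal{P}^n)+\varepsilon$. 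Since $\diam(\mathcal{Q})<\varepsilon_f$, this applies to $\mathcal{Q}$ (up to shrinking $\varepsilon_f$ below $\rho$, or by a standard refinement argument).

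The heart of the matter is then to bound $\frac{1}{n}H_\omega(\mathcal{Q}^n)$ from above by something close to $h_\nu(f,\mathcal{Q})$. The idea is that $\omega$ is supported on a tiny unstable disc, so an element of $\mathcal{Q}^n$ meeting this disc corresponds to a piece of the $f^n$-image that, by the dominated-splitting geometry of Propositions \ref{Prop: Domin}, \ref{Prop:Var}, \ref{Prop:Cover}, stays a graph over the $F$-direction with Lipschitz constant $\le \tfrac13$ and never "bends back" along the $E$-direction — so the number of such pieces is controlled (polynomially in $n$, by the covering estimate of Proposition \ref{Prop:Cover}) in terms of how the orbit of $x_0$ is coded by $\mathcal{Q}$ under $f$. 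More precisely, I expect to show that the atoms of $\mathcal{Q}^n$ that intersect $\Sigma$ are in finite-to-one (with subexponential multiplicity) correspondence with the $\mathcal{Q}$-itineraries of length $n$ that occur near the orbit, which lets one pass from the static entropy $H_\omega(\mathcal{Q}^n)$ on the leaf to a bound by $n\,h_\nu(f,\mathcal{Q})$ plus a sublinear error: the uniform diameter $\varepsilon_f$ guarantees, via Proposition \ref{Prop: Domin}, that each iterate of a local unstable piece of size $\hat\varepsilon_f$ remains a controlled graph, so that two points in the same $\mathcal{Q}^n$-atom on $\Sigma$ stay in a single Bowen-type tube, and Proposition \ref{Prop:Cover} bounds the number of tubes needed.

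Assembling these, I would conclude $h_\nu^F(f)\le \liminf_n\frac1n H_\omega(\mathcal{Q}^n)+\varepsilon \le h_\nu(f,\mathcal{Q})+\varepsilon$, and since $\varepsilon>0$ is arbitrary, $h_\nu^F(f)\le h_\nu(f,\mathcal{Q})$. The main obstacle, and where I expect the real work to lie, is the combinatorial/geometric estimate in the previous paragraph: quantifying precisely that the $\mathcal{Q}^n$-refinement, when restricted to a uniformly-sized local unstable leaf, does not create more than a subexponential overcount relative to the genuine $\mathcal{Q}$-itineraries of the system. This requires using the no-bending property along $E$ (Proposition \ref{Prop: Domin}), the leaf-to-leaf graph transfer (Proposition \ref{Prop:Var}) to compare nearby leaves, and the polynomial covering bound (Proposition \ref{Prop:Cover}) to control the branching at each step, all while keeping track that $\diam(\mathcal{Q})<\varepsilon_f$ ensures every relevant piece stays inside the region where these propositions apply. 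Secondary technical points — measurability of $\xi$ and the conditional measures, the choice of $\Sigma$ so that $\omega(K\cap\Sigma)>0$, and the reduction allowing us to assume $x_0$ is a density point of $K$ along the leaf — are handled by the cited results of Ledrappier–Young and Rokhlin together with Propositions \ref{Prop:Two Balls} and \ref{Pro:unstable-entropy-partition}.
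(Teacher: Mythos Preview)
Your outline has the right ingredients but a genuine structural gap. You cannot apply Proposition~\ref{Pro:unstable-entropy-partition} directly with $\mathcal Q$: the radius $\rho=\rho_\nu$ produced by Proposition~\ref{Prop:Two Balls} depends on $\nu$ and on the tolerance $\varepsilon$, whereas $\varepsilon_f$ is a constant of the theorem that must depend only on $f$ and on the dominated splitting over $\Lambda$. So ``shrinking $\varepsilon_f$ below $\rho$'' is not permitted, and the ``standard refinement argument'' you defer to is in fact the entire content of the proof.

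What the paper actually does is introduce an auxiliary \emph{fine} partition $\mathcal P_\nu$ with $\diam(\mathcal P_\nu)<\rho_\nu$ and $\nu(\partial\mathcal P_\nu)=0$, apply Proposition~\ref{Pro:unstable-entropy-partition} to $\mathcal P_\nu$, and then split
\[
\tfrac{1}{n}H_{\omega_\nu}(\mathcal P_\nu^n)\ \le\ \tfrac{1}{n}H_{\omega_\nu}(\mathcal Q^n)\ +\ \tfrac{1}{n}H_{\omega_\nu}(\mathcal P_\nu^n\mid\mathcal Q^n).
\]
The first term is handled by a purely soft argument (Lemma~\ref{Lem:11} plus the weak-$*$ convergence $\omega_\nu^n\to\nu$, which holds because $\omega_\nu$ is supported on a set $K_\nu$ of Birkhoff-generic points and $\nu(\partial\mathcal Q)=0$), giving $\limsup_n\tfrac{1}{n}H_{\omega_\nu}(\mathcal Q^n)\le h_\nu(f,\mathcal Q)$ with no geometry at all. \emph{All} of Propositions~\ref{Prop: Domin}, \ref{Prop:Var}, \ref{Prop:Cover} are spent on the conditional term, showing $\limsup_n\tfrac{1}{n}H_{\omega_\nu}(\mathcal P_\nu^n\mid\mathcal Q^n)\le\alpha$: inside each $\mathcal Q^n$-atom one covers the intersection with the leaf and with $K_\nu$ by a subexponential number of small local unstable discs (this uses a Pesin block $E_\nu$, the last-return-time control $m_n^\nu(x)/n\to 1$ of Lemma~\ref{return-time}, and the graph/covering estimates), and each such disc meets only subexponentially many $\mathcal P_\nu^n$-atoms. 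In your write-up the geometric propositions are aimed at bounding $H_{\omega}(\mathcal Q^n)$ itself, which is the wrong target; and the Pesin-set/return-time mechanism, which is what lets you control the iterates between the last visit to $E_\nu$ and time $n$, is missing.
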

     
   	   If $\lambda_E^{\max}(\nu)\leq 0$, then $h_{\nu}^{F}(f)=h_{\nu}(f)$. In consequence, 
   	   Theorem \ref{Thm:A} follows directly from Theorem \ref{Prop:Key}.

   	   We now prove Theorems \ref{Thm:B} and  \ref{Thm:C} by assuming that Theorem \ref{Prop:Key} holds, and postpone the proof of Theorem \ref{Prop:Key} to the next section. 
       
   	   \begin{proof}[Proof of Theorem \ref{Thm:B}]
   	   Let $\{\nu_k\}_{k\ge 1}$ be a sequence of invariant measures supported on $\Lambda$. For each 
       $k\ge 1$,  assume that  $\lambda_E^{\max}(x)\leq 0$, $\lambda_F^{\min}(x)>0$ for $\nu_k$-almost every $x$ . 
   	   	Assume further that $\nu_k \rightarrow \mu$ as $k$ approaching to infinity.
   	   	
   	   	For each $k\ge 1$,
   	   	let $\nu_k=\int \nu_{k,\xi} d \hat{m}_{k}(\nu_{k,\xi})$ denote the ergodic decomposition of $\nu_k$. Let 
   	   	$$X^1_{k}\bigsqcup X^2_{k} \bigsqcup \cdots \bigsqcup X^{N_k}_k$$
   	   	be a partition of the ergodic measures such that 
   	   	\begin{equation}
   	   		\forall 1\leq i\leq N_{k},~\forall \nu,\nu'\in X_k^{i},~|h_{\nu}(f)-h_{\nu'}(f)|\leq \frac{1}{k},~{\rm dist^*}(\nu,\nu')\leq \frac{1}{k}
   	   	\end{equation}
        where ${\rm dist^*}$ is the distance on the space of all Borel probability measures on $\Lambda$.
   	   	For each $1\leq i\leq N_{k}$, if $\hat{m}_{k}(X_k^{i})>0$, we choose $\nu_{k}^{i}\in X_k^{i}$ to be an ergodic component of $\nu_{k}$ such that $\lambda_E^{\max}(\nu_{k}^{i})\leq 0$ and $\lambda_F^{\min}(\nu_{k}^{i})> 0$.  
   	  Let  $\alpha_k^i=\hat{m}_{k}(X_k^{i})$ and
   	   	$\nu'_k=\sum_{i=1}^{N_k}  \alpha_k^i \cdot \nu_{k}^{i}$,  then we have that
   	   	$$|h_{\nu_k}(f)-h_{\nu'_k}(f)|\leq \frac{1}{k},~{\rm dist^*}(\nu_k,\nu'_k)\leq \frac{1}{k}.$$
   	   	
   	   Let $\cT:=\{\nu_{k}^{i}:1\leq i\leq N_k,k\ge 1\}$, and let $\cQ$ be a finite partition such that $\mu(\partial \cQ)=0$, $\nu(\partial \cQ)=0$ for all $\nu\in \cT$, and $\diam(\cQ)<\varepsilon_{f}$. 
   	   	
   	   	By Theorem \ref{Thm:A}, for every $k\ge 1$ we have that
   	  \begin{align*}
   	  	h_{\nu_k}(f)\leq h_{\nu'_k}(f)+\frac{1}{k}=\sum_{i=1}^{N_k}  \alpha_k^i \cdot h_{\nu_{k}^{i}}(f)+\frac{1}{k}=\sum_{i=1}^{N_k}  \alpha_k^i \cdot h_{\nu_{k}^{i}}(f,\cQ)+\frac{1}{k}.
   	  \end{align*}
   	  For every $m>0$, by the definition of the metric entropy  with respect to a partition we have that
   	  $$\sum_{i=1}^{N_k}  \alpha_k^i \cdot h_{\nu_{k}^{i}}(f,\cQ)= h_{\nu'_k}(f,\cQ)\leq  \frac{1}{m} H_{\nu'_{k}}(\cQ^m).$$
   	  Since $\nu'_k\rightarrow \mu$ as $k\rightarrow \infty$, we have that $H_{\nu'_k}(\cQ^m)\rightarrow H_{\mu}(\cQ^m)$.
      Therefore, one can conclude that 
   	  $$ \limsup_{k\to\infty} h_{\nu_k}(f)\leq \frac{1}{m} H_{\mu}(\cQ^m).$$
   	  Letting $m\rightarrow \infty$, we have that 
   	   $$ \limsup_{k\to\infty} h_{\nu_k}(f)\leq h_{\mu}(f,\cQ)\leq h_{\mu}(f).$$
   	   This completes the proof of Theorem \ref{Thm:B}.
   	   \end{proof}
   	   
   	   \begin{proof}[Proof of Theorem \ref{Thm:C}]
   	   Let $\mu$ be an $f$-invariant measure supported on $\Lambda$ with
   	   $\lambda_E^{\max}(x)\leq 0$ and $\lambda_F^{\min}(x)>0$ for $\mu$-almost every $x$.   	     
   	   For every sequence of $f$-invariant measures $\{\nu_k\}_{k>0}$ with $\nu_k\rightarrow \mu$ and 
   	   $h_{\nu_k}(f)\rightarrow h$ as $k$ approaching to infinity, we show that $h\leq h_{\mu}(f)$ which implies the desired result.   

       \begin{claim}
       For every $\chi>0$, we have that 
   	   $$\nu_{k}\Big(\big\{x:\lambda_{E}^{\max}(x)<\chi\big\}\Big)\rightarrow 1,~\text{and}~\ \nu_{k}\Big(\big\{x:\lambda_{F}^{\min}(x)>0\big\}\Big)\rightarrow 1$$ 
       as $k$ approaching to infinity. 
       \end{claim}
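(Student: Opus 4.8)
The plan is to prove the two assertions by contradiction, using only the weak$^*$ semicontinuity of the \emph{integrated} extremal exponents together with an absolute-continuity argument; the two cases are mirror images of each other, so I will spell out the one for $\lambda_E^{\max}$ and only indicate the modifications for $\lambda_F^{\min}$.

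The first step is to record that the functionals $\nu\mapsto\int\lambda_E^{\max}\,d\nu$ and $\nu\mapsto\int\lambda_F^{\min}\,d\nu$, defined on the weak$^*$-compact set of $f$-invariant Borel probability measures supported on $\Lambda$, are respectively upper and lower semi-continuous. Indeed, writing $a_n(x)=\log\|D_xf^n|_{E(x)}\|$ and $b_n(x)=\log m(D_xf^n|_{F(x)})$ --- continuous functions on the compact set $\Lambda$, with $a_n$ subadditive and $b_n$ superadditive along orbits --- Kingman's theorem and Fekete's lemma give, for every $f$-invariant $\nu$,
$$\int\lambda_E^{\max}\,d\nu=\inf_{n\ge1}\frac1n\int a_n\,d\nu,\qquad\int\lambda_F^{\min}\,d\nu=\sup_{n\ge1}\frac1n\int b_n\,d\nu,$$
and each $\nu\mapsto\frac1n\int a_n\,d\nu$, $\nu\mapsto\frac1n\int b_n\,d\nu$ is weak$^*$-continuous; an infimum (resp.\ supremum) of continuous functions is upper (resp.\ lower) semi-continuous.

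Next, fix $\chi>0$ and suppose towards a contradiction that $\nu_k(A_k)\not\to0$, where $A_k:=\{x\in\Lambda:\lambda_E^{\max}(x)\ge\chi\}$; after passing to a subsequence we may assume $\nu_k(A_k)\ge\delta$ for some $\delta>0$. Since $\lambda_E^{\max}$ is $f$-invariant wherever defined, $A_k$ is an $f$-invariant set, so the normalized restriction $\mu_k^{b}:=\nu_k|_{A_k}/\nu_k(A_k)$ is an $f$-invariant probability measure with $\lambda_E^{\max}\ge\chi$ holding $\mu_k^{b}$-a.e., whence $\int\lambda_E^{\max}\,d\mu_k^{b}\ge\chi$. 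Passing to a further subsequence, $\mu_k^{b}\to\mu^{b}$ weak$^*$ for some $f$-invariant probability measure $\mu^{b}$. Upper semi-continuity yields $\int\lambda_E^{\max}\,d\mu^{b}\ge\limsup_k\int\lambda_E^{\max}\,d\mu_k^{b}\ge\chi>0$. On the other hand, $\mu_k^{b}\le\delta^{-1}\nu_k$, so testing against non-negative continuous functions and using $\nu_k\to\mu$ gives $\mu^{b}\le\delta^{-1}\mu$; in particular $\mu^{b}\ll\mu$, and the hypothesis $\lambda_E^{\max}\le0$ $\mu$-a.e.\ then forces $\lambda_E^{\max}\le0$ $\mu^{b}$-a.e., so $\int\lambda_E^{\max}\,d\mu^{b}\le0$. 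This contradiction shows $\nu_k(A_k)\to0$, i.e.\ $\nu_k(\{\lambda_E^{\max}<\chi\})\to1$. The second assertion is obtained in the same way with $B_k:=\{x\in\Lambda:\lambda_F^{\min}(x)\le0\}$ in place of $A_k$: the limit measure $\mu^{b}$ satisfies $\int\lambda_F^{\min}\,d\mu^{b}\le0$ by lower semi-continuity, while $\mu^{b}\ll\mu$ and $\lambda_F^{\min}>0$ $\mu$-a.e.\ give $\int\lambda_F^{\min}\,d\mu^{b}>0$, a contradiction; hence $\nu_k(\{\lambda_F^{\min}>0\})\to1$.

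The delicate point --- the one I expect to be the main obstacle --- is that $\lambda_E^{\max}$ and $\lambda_F^{\min}$ are only defined almost everywhere and are far from continuous, so the portmanteau theorem cannot be applied directly to the level sets $\{\lambda_E^{\max}\ge\chi\}$, $\{\lambda_F^{\min}\le0\}$. The scheme above sidesteps this by never using these sets topologically: it extracts from a putative non-vanishing family of them an auxiliary sequence of invariant measures $\mu_k^{b}$, controls only the genuinely semicontinuous integrated exponents along $\mu_k^{b}$, and transports the almost-everywhere sign conditions from $\mu$ to the limit $\mu^{b}$ through the domination $\mu^{b}\le\delta^{-1}\mu$. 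One should double-check the routine measure-theoretic points (that $A_k,B_k$ are genuinely $f$-invariant modulo $\nu_k$-null sets, that weak$^*$ limits respect the inequality $\mu_k^{b}\le\delta^{-1}\nu_k$, and that $\int\lambda_F^{\min}\,d\mu^{b}>0$ is strict because $\lambda_F^{\min}>0$ on a set of full $\mu^{b}$-measure), but these present no real difficulty.
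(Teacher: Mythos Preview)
Your proof is correct and follows essentially the same strategy as the paper: argue by contradiction, restrict $\nu_k$ to the ``bad'' level set to obtain an auxiliary sequence of invariant measures, pass to a weak$^*$ limit, use upper/lower semi-continuity of the integrated extremal exponent to get a sign on $\int\lambda_E^{\max}\,d\mu^{b}$ (resp.\ $\int\lambda_F^{\min}\,d\mu^{b}$), and contradict the hypothesis on $\mu$ via absolute continuity of $\mu^{b}$ with respect to $\mu$. The only cosmetic difference is that the paper tracks both pieces of the decomposition $\nu_k=(1-\alpha_k)\nu_k^{+}+\alpha_k\nu_k^{-}$ and obtains $\mu=(1-\alpha)\mu^{+}+\alpha\mu^{-}$ in the limit, whereas you go straight to the domination $\mu_k^{b}\le\delta^{-1}\nu_k$ and pass it to the limit; your route is marginally more economical but carries the same content.
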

   	   \begin{proof}[Proof of the claim]
        We provide a proof of the statement $\nu_{k}\Big(\big\{x:\lambda_{E}^{\max}(x)<\chi\big\}\Big)\rightarrow 1\,(k\to\infty)$, the other statement can be proven in a similar fashion.
       
        We will prove this result by contradiction.         
        Without loss of generality,  assume that there exists $\varepsilon_0>0$ such that 
        $$\nu_{k}\Big(\big\{x:\lambda_{E}^{\max}(x)< \chi\big\}\Big)<1-\varepsilon_0,~\forall k>0.$$
        Let $A:=\big\{x:\lambda_{E}^{\max}(x)< \chi\big\}$, and let $\alpha_k:=1-\nu_{k}(A)>\varepsilon_0$,  
         $\displaystyle{\frac{\nu_k(\cdot\cap A)}{\nu_k(A)}:=\nu_k^{+}(\cdot)}$ and $\displaystyle{\frac{\nu_k(\cdot\cap (M-A))}{\nu_k(M-A)}:=\nu_k^{-}(\cdot)}$ for each $k>0$,  then
        $$\nu_k(\cdot)=(1-\alpha_{k})\nu_k^{+}(\cdot)+\alpha_k \nu_k^{-}(\cdot).$$
        Since $A$ is an $f$-invariant subset, we have that $\nu_k^{\pm}$ are $f$-invariant measures. By passing to a subsequence, we assume that 
        $$\lim_{k\rightarrow \infty}\alpha_k=\alpha \geq \varepsilon_0,~\lim_{k\rightarrow \infty}\nu_k^{+}=\mu^{+},~\lim_{k\rightarrow \infty}\nu_k^{-}=\mu^{-}.$$
        Consequently, one has that $\mu^{+},\mu^-$ are $f$-invariant measures and $\mu=(1-\alpha) \mu^{+}+ \alpha \mu^{-}(\alpha\geq \varepsilon_0)$.

        For every $f$-invariant measure $\nu$, it follows from the sub-additive ergodic theorem  that
        $$\lambda_{E}^{\max}(\nu):=\int \lambda_{E}^{\max}(x) d\nu=\lim_{n\rightarrow \infty} \frac{1}{n} \int \log \| D_xf^n|_{E(x)}\| d\nu=\inf_{n>0} \frac{1}{n} \int \log \| D_xf^n|_{E(x)}\| d\nu.$$
        Then, the function $\nu \mapsto \lambda_{E}^{\max}(\nu)$ is upper semi-continuous on the space of all $f$-invariant measures.
        Since $\nu_{k}^{-}$ is supported on $\big\{x:\lambda_{E}^{\max}(x)\ge \chi\big\}$ for every $k>0$, it follows that
        $$\int \lambda_{E}^{\max}(x) d\mu^{-}=\lambda_{E}^{\max}(\mu^{-})\geq \limsup_{k\rightarrow \infty} \lambda_{E}^{\max}(\nu_{k}^{-})\geq \chi.$$
        Consequently, one can show that  $\mu^{-}\Big(\big\{x:\lambda_{E}^{\max}(x)\geq \chi\big\}\Big)>0$. It follows immediately that 
        $$\mu\Big(\big\{x:\lambda_{E}^{\max}(x)\geq \chi\big\}\Big)\geq \alpha \mu^{-}\Big(\big\{x:\lambda_{E}^{\max}(x)\geq \chi\big\}\Big)>0,$$
        which contradicts with the assumption that $\lambda_{E}^{\max}(x)\leq 0$ for $\mu$-almost every $x$.
       \end{proof}
   	   
   	 Given a small number $\alpha>0$, choose $\chi>0$ such that $\chi \cdot \dim E<\alpha$.    
       Let $\cS=\big\{x:\lambda^{\max}_{E}(x)\leq \chi,~ \lambda^{\min}_{F}(x)> 0\big\}$, by the Claim we have that
       $$\varepsilon_k:=1-\nu_k \left( \cS\right) \rightarrow 0~(k\rightarrow \infty).$$
   	   For each $k>0$, let $\displaystyle{\nu_{k,1}(\cdot)=\frac{\nu_k(\cdot\cap \cS)}{\nu_k(\cS)}}$ and $\displaystyle{\nu_{k,2}(\cdot)=\frac{\nu_k(\cdot\cap(M-\cS))}{\nu_k(M-\cS)}}$, then one has 
   	   $\nu_k=(1-\varepsilon_{k})\nu_{k,1}+\varepsilon_{k}\nu_{k,2}$ and
   	   \begin{align*}
   	    h:=\lim_{k\rightarrow \infty}h_{\nu_k}(f)=\lim_{k\rightarrow \infty }\Big( (1-\varepsilon_{k})h_{\nu_{k,1}}(f)+\varepsilon_{k}h_{\nu_{k,2}}(f)\Big )=\lim_{k\rightarrow \infty}h_{\nu_{k,1}}(f).
   	   \end{align*}


   	  Since $\nu_k \rightarrow \mu$ and $\varepsilon_k\rightarrow 0$, we have that $\nu_{k,1}\rightarrow \mu$ as $k\rightarrow \infty$. For the sequence of invariant measures $\{\nu_{k,1}\}$, 
      using similar arguments as  in the proof of Theorem \ref{Thm:B}, for each $k$, one can show that there exists an invariant measure $\displaystyle{\nu'_{k,1}=\sum_{j=1}^{N_k} \alpha_k^j \nu^{j}_{k,1}}$ such that  
      \begin{itemize}
           \item $\nu^{j}_{k,1}$ is an ergodic measure with $\lambda_{F}^{\min}(\nu^{j}_{k,1})>0$ and $\lambda_{E}^{\max}(\nu^{j}_{k,1})\leq \chi$ for every $1\leq j\leq N_k$;
           \item $|h_{\nu'_{k,1}}(f)-h_{\nu_{k,1}}(f)|\leq \frac{1}{k}$ and ${\rm dist^*}(\nu'_{k,1},\nu_{k,1})\leq \frac{1}{k}$.
       \end{itemize}   	   
   	   Moreover, there exists a finite partition $\cQ$ such that for every $k>0$
   	   $$h_{\nu'_{k,1}}(f)=\sum_{j=1}^{N_k} \alpha_k^j h_{\nu_{k,1}^{j}}(f)\leq \sum_{j=1}^{N_k} \alpha_k^j h_{\nu_{k,1}^{j}}^{F}(f)+\dim E \cdot \chi\leq \sum_{j=1}^{N_k} \alpha_k^j h_{\nu_{k,1}^{j}}(f,\cQ)+\alpha,$$
       where the second inequality uses the property of partial entropy (see Section \ref{Sec:entropy}), and the last inequality follows from Theorem \ref{Prop:Key}.
       
   	   Repeating the argument from the proof of Theorem \ref{Thm:B}, we conclude that $h\leq h_{\mu}(f)+\alpha$.
   	   Since $\alpha$ is chosen arbitrarily, we complete the proof of Theorem \ref{Thm:C}.
   	   \end{proof}
   	   
   	   \section{Proof of Theorem \ref{Prop:Key}} \label{Sec:PofT4}
   	   Assume that $\nu$ is an ergodic measure supported on $\Lambda$ with $\lambda_F^{\min}(\nu)>0$.
   	   By the uniqueness of the Oseledec splitting, there exists $i(\nu)$ with $\lambda^{i(\nu)}(\nu)>0$ and $E^{u,i(\nu)}=F$.  	   
       Let $W^{F}(x)$ denote the ($i(\nu)$-strongly) unstable manifold $W^{u,i(\nu)}(x)$ (see \eqref{eq:Unstable}), and let  $d^{F}_{x}$ denote the Riemannian metric inherited on $W^{F}(x)$.

       Using the Pesin theory of non-uniform hyperbolic systems \cite{Pesin07}, we have the following version of the Pesin local unstable manifold theorem.
   	   \begin{theorem}\label{Thm:Unstable}
   	   	  There exist $\lambda_{\nu}>0$ and  $\varepsilon_{\nu}>0$ such that for $\nu$-almost every $x$, there exist $\delta_x>0$, $C_x\geq 1$ and a $C^r$ map $g_x: F(x)(\delta_x)\mapsto E(x)$ with $g_x(0)=0,~\Lip(g_x)\leq \frac{1}{4}$ and satisfying 
   	   	\begin{enumerate}
   	   		\item[(a)]  $ \delta_{f^{\pm}(x)}\geq \delta_x e^{-\varepsilon_{\nu}}$, $C_{f^{\pm}(x)}\leq e^{\varepsilon_{\nu}}C_x$;
   	   		\item[(b)]  $F_x(\graph(g_x))\supset \graph(g_{f(x)})$, where $F_x$ is defined in Section \ref{Sec:Dominated};
   	   		\item[(c)]  $\exp_x (\graph(g_x))\subset W^F(x)$ and $W^F(x)=\bigcup_{n=1}^{+\infty} f^{n} \exp_{f^{-n}(x)} (\graph(g_{f^{-n}(x)}))$;
   	   		\item[(d)] for $n>0$ and $y,z\in \exp_x (\graph(g_x))$, one has $d^F_{f^{-n}(x)}(f^{-n}(y),f^{-n}(z))\leq C_x e^{-n\lambda_{\nu}} d^F_x(y,z)$.
   	   	\end{enumerate}
   	   \end{theorem}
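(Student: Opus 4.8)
The plan is to deduce Theorem \ref{Thm:Unstable} from the classical Pesin local unstable manifold theorem \cite{Pesin07} applied to the Oseledec decomposition at the index $i(\nu)$, the dominated splitting entering only to identify the unstable sub-bundle and to keep all the relevant objects graphs over $F$. First I would record that $F=E^{u,i(\nu)}$ and $E=E^{s,i(\nu)}$ $\nu$-almost everywhere: $F$ and $E$ are continuous $Df$-invariant sub-bundles, every Lyapunov exponent along $F$ is $\geq\lambda_F^{\min}(\nu)>0$ while every Lyapunov exponent along $E$ is $\leq\lambda_E^{\max}(\nu)\leq\lambda_F^{\min}(\nu)-\lambda$, so by uniqueness of the Oseledec splitting $F=E^1\oplus\cdots\oplus E^{i(\nu)}$ and $E=E^{i(\nu)+1}\oplus\cdots\oplus E^t$, and in particular $\lambda^{i(\nu)}(\nu)=\lambda_F^{\min}(\nu)$. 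Thus $W^F(x)=W^{u,i(\nu)}(x)$ is the Pesin unstable manifold tangent to $F(x)$, and it remains to produce the graphs $g_x$ and check the quantitative items (a)--(d).

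Fix $\lambda_\nu\in(0,\lambda_F^{\min}(\nu))$ and then $\varepsilon_\nu\in(0,\lambda_F^{\min}(\nu)-\lambda_\nu)$, taken sufficiently small. By the Oseledec theorem along $F$ (applied to $f^{-1}$) together with the tempering lemma, for $\nu$-almost every $x$ there is $L(x)\geq 1$ with $L(f^{\pm}(x))\leq e^{\varepsilon_\nu}L(x)$ and, for all $n\geq 0$,
$$\|D_xf^{-n}|_{F(x)}\|\leq L(x)\,e^{-n(\lambda_F^{\min}(\nu)-\varepsilon_\nu)}\qquad\text{and}\qquad\limsup_{n\to\infty}\tfrac1n\log\|D_xf^{-n}|_{F(x)}\|=-\lambda_F^{\min}(\nu).$$
Put $\delta_x:=\hat\varepsilon_f/L(x)$ and $C_x:=2L(x)$, so that (a) follows from the tempering of $L$. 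The graph $g_x$ is then produced by the standard forward graph transform, run in Lyapunov-adapted norms in which the $F$-cocycle is uniformly expanding at rate $e^{\lambda_F^{\min}(\nu)-\varepsilon_\nu}$, with the key geometric input supplied by Lemma \ref{Lem:Domin}: with $\tfrac14$ in place of $\tfrac13$ (which the same computation allows, since $e^{-\lambda}/4<\tfrac14$), the map $F_{f^{-1}(x)}$ carries a $C^1$ graph over $F(f^{-1}(x))$ of slope $\leq\tfrac14$ vanishing at $0$ onto a graph over $F(x)$ of slope $\leq\tfrac14$ vanishing at $0$ --- domination being exactly what prevents the image from bending off $F$. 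Iterating this from the zero graph at $f^{-m}(x)$ and letting $m\to\infty$ produces, on the scale $F(x)(\delta_x)$, a limit $g_x$ with $g_x(0)=0$ and $\Lip(g_x)\leq\tfrac14$, of class $C^r$ by the $C^r$-section (fiber-contraction) theorem, the transform contracting $C^0$-distance on the fixed domain at the rate $\|D_xf^{-n}|_{F(x)}\|\to0$.

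It remains to read off (b)--(d). Property (b) is immediate from the construction: $g_{f(x)}$ is the restriction to $F(f(x))(\delta_{f(x)})$ of the transform of $g_x$ under $F_x$, so $F_x(\graph(g_x))\supset\graph(g_{f(x)})$; applying $\exp$ and iterating, the nested forward images exhaust the global unstable manifold, yielding $W^F(x)=\bigcup_{n\geq1}f^n\exp_{f^{-n}(x)}(\graph(g_{f^{-n}(x)}))$. For (d), let $y,z\in\exp_x(\graph(g_x))$; reading (b) backwards gives $F_x^{-n}(\graph(g_x))\subset\graph(g_{f^{-n}(x)})$, a $\tfrac14$-Lipschitz graph over $F(f^{-n}(x))$, so by \eqref{eq:Projec} and $\|D\exp\|\leq2$ the derivative of $f^{-n}$ along it is at most $2\|D_xf^{-n}|_{F(x)}\|$; integrating along a short curve between $f^{-n}(y)$ and $f^{-n}(z)$ gives $d^F_{f^{-n}(x)}(f^{-n}(y),f^{-n}(z))\leq 2L(x)\,e^{-n(\lambda_F^{\min}(\nu)-\varepsilon_\nu)}\,d^F_x(y,z)\leq C_x\,e^{-n\lambda_\nu}\,d^F_x(y,z)$. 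The same bound yields $\tfrac1n\log d(f^{-n}(y),f^{-n}(x))\leq\tfrac1n\log\!\big(2L(x)\,d^F_x(y,x)\big)+\tfrac1n\log\|D_xf^{-n}|_{F(x)}\|$, whose $\limsup$ is $-\lambda_F^{\min}(\nu)=-\lambda^{i(\nu)}(\nu)$, so \eqref{eq:Unstable} gives $\exp_x(\graph(g_x))\subset W^{u,i(\nu)}(x)=W^F(x)$, which is (c).

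The main obstacle I anticipate is not the domination bookkeeping, which Lemma \ref{Lem:Domin} and Proposition \ref{Prop: Domin} already isolate, but the tempered graph-transform machinery itself: selecting the scale $\delta_x$ and constant $C_x$ so that the limit $g_x$ exists, is genuinely of size $\delta_x$ rather than shrinking under iteration, and is $C^r$. This is where the \emph{non-uniform} positivity of $\lambda_F^{\min}(\nu)$ is used and where one passes through Lyapunov-adapted norms; the dominated splitting contributes only the uniform transversality bound \eqref{eq:Projec}, which is what lets the Lipschitz slope $\tfrac14$ be maintained without an additional tempered factor.
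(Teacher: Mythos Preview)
The paper does not prove Theorem \ref{Thm:Unstable}: it is stated as a version of the Pesin local unstable manifold theorem and simply attributed to \cite{Pesin07}, with no argument given. Your proposal is therefore not competing with a proof in the paper but rather sketching the content of the citation; as an outline of the standard Pesin construction (tempering of the Oseledec cocycle along $F$, iterated graph transform in adapted charts, fiber contraction for $C^r$ regularity) it is broadly correct and is indeed what the reference provides.

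Two places in your sketch are too casual. In (d) you write that ``the derivative of $f^{-n}$ along it is at most $2\|D_xf^{-n}|_{F(x)}\|$'' by \eqref{eq:Projec} and $\|D\exp\|\leq 2$; this does not follow, because at a point $y\neq x$ the tangent space to the graph has a nonzero $E$-component, and in Theorem \ref{Prop:Key} one does not assume $\lambda_E^{\max}(\nu)\leq 0$, so $\|D_xf^{-n}|_{E(x)}\|$ need not be small. The honest argument goes through the $F$-parametrizations: since both $g_x$ and $g_{f^{-n}(x)}$ are $\tfrac14$-Lipschitz graphs, the induced map on $F$-bases contracts step by step at a rate controlled by $m(D_{f^{-k}(x)}f|_{F})^{-1}$ plus a tempered error from $\Lip(F_{f^{-k}(x)}-D_{f^{-k}(x)}f)$, and graph distances are uniformly comparable to base distances via $C_{\rm ang}$; telescoping then gives $C_xe^{-n\lambda_\nu}$. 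Second, your appeal to Lemma \ref{Lem:Domin} is for the one-step transform on the \emph{uniform} scale $\hat\varepsilon_1$, whereas the Pesin construction must be run on the shrinking tempered scale $\delta_{f^{-n}(x)}$, and showing that the limit graph genuinely fills out $F(x)(\delta_x)$ uses the non-uniform expansion along $F$. You flag exactly this as the ``main obstacle'', which is fair, but it is the entire substance of the theorem and is precisely what the paper outsources to \cite{Pesin07}.
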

   	 
   	   \begin{lemma}\label{return-time}
   	   	Let $(X,f,\mu)$ be an ergodic measure-preserving system,
   	   	and let  $E$ be a measurable subset with $\mu(E)>0$. 
   	   	For every $n\in \N$, let 
   	   	$$m_n(x):=m_n^{E}(x)=\max\Big\{0\leq k<n:f^k(x)\in E\Big\},$$
   	   	  then  we have that $\displaystyle{\lim_{n\to \infty}\frac{m_n(x)}{n}=1}$ for $\mu$-almost every $x$.
   	   \end{lemma}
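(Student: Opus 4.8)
The plan is to reduce the statement to the Birkhoff ergodic theorem applied to the indicator $\mathbf{1}_E$, together with a careful bookkeeping of the successive times at which the orbit of a point visits $E$.

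Write $a:=\mu(E)>0$ and $S_n(x):=\sum_{k=0}^{n-1}\mathbf{1}_E(f^k x)$. By the Birkhoff ergodic theorem and ergodicity, $\tfrac{1}{n}S_n(x)\to a$ for $\mu$-almost every $x$; fix such an $x$. In particular $S_n(x)\to\infty$, so the forward orbit of $x$ meets $E$ infinitely often, and we may list its visit times as $0\le t_1(x)<t_2(x)<\cdots$, i.e.\ $\{k\ge 0: f^k x\in E\}=\{t_j(x):j\ge 1\}$. The first step is then to establish the asymptotics of the $t_j(x)$: since $S_m(x)=\#\{j: t_j(x)<m\}$ we have $S_{t_j(x)+1}(x)=j$, hence $\tfrac{j}{t_j(x)+1}=\tfrac{S_{t_j(x)+1}(x)}{t_j(x)+1}\to a$ as $j\to\infty$ (using $t_j(x)\to\infty$), so $t_j(x)/j\to 1/a\in(0,\infty)$. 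Consequently
$$\frac{t_j(x)}{t_{j+1}(x)}=\frac{t_j(x)/j}{t_{j+1}(x)/(j+1)}\cdot\frac{j}{j+1}\longrightarrow 1\qquad(j\to\infty).$$

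Next I would relate $m_n(x)$ to the visit times. For $n$ large enough that $S_n(x)\ge 1$, the visit times that are $<n$ are exactly $t_1(x),\dots,t_{S_n(x)}(x)$, so $m_n(x)=t_{S_n(x)}(x)$, and by the very definition of $S_n(x)$ the next visit time satisfies $t_{S_n(x)+1}(x)\ge n$. Setting $j(n):=S_n(x)\to\infty$, this gives
$$\frac{t_{j(n)}(x)}{t_{j(n)+1}(x)}\ \le\ \frac{m_n(x)}{n}\ <\ 1,$$
and the left-hand side tends to $1$ by the previous step, so $m_n(x)/n\to 1$ by squeezing. As this holds off a $\mu$-null set, the lemma follows.

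There is no genuine difficulty here; the points to watch are the index bookkeeping (whether $t_1(x)$ may equal $0$, the off-by-one in $S_n$ versus the indexing of visit times, and the fact that $S_n(x)\ge 1$ only for $n$ large) and the observation that the limit $1/a$ must be a finite nonzero number — which is precisely where $\mu(E)>0$ and $\mu$ being a probability measure are used. Alternatively one could argue through the backward first-return time $\tau(y):=\min\{j\ge 1: f^{-j}(y)\in E\}$, noting $n-m_n(x)=\tau(f^n x)$, but then one must show $\tau(f^n x)=o(n)$ almost everywhere without $\tau$ being integrable, which is less transparent than the visit-time argument above.
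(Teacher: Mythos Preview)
Your argument is correct. Both your proof and the paper's rest on the Birkhoff ergodic theorem applied to $\mathbf{1}_E$, but the executions differ. You argue directly: from $S_n(x)/n\to a$ you extract the asymptotics $t_j(x)/j\to 1/a$ of the successive visit times, deduce $t_j/t_{j+1}\to 1$, and then squeeze $m_n(x)/n$ between $t_{S_n(x)}/t_{S_n(x)+1}$ and $1$. The paper instead argues by contradiction: assuming $\liminf m_n(x)/n<1-4\varepsilon$ on a set of positive measure, it uses Birkhoff to trap the visit count $\#\{0\le i<n:f^i x\in E\}$ in a narrow window for all large $n$, and then observes that if the last visit before $n_x$ occurs prior to $n_x(1-3\varepsilon)$ the visit counts on $[0,n_x)$ and on $[0,n_x(1-3\varepsilon))$ must coincide, forcing an inequality of the form $n_x(1-2\varepsilon)\mu(E)\ge n_x(1-\varepsilon)\mu(E)$. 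Your route is more transparent and avoids the somewhat ad hoc choice of the window $(n(1-\varepsilon)\mu(E),\,n\tfrac{1-2\varepsilon}{1-3\varepsilon}\mu(E))$; the paper's route, on the other hand, never needs to introduce or manipulate the sequence of visit times explicitly. Either way the only substantive input is $\mu(E)>0$ so that $S_n\to\infty$ (equivalently, $1/a$ is finite), exactly as you note.
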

   	   \begin{proof} 
   	   	This result may be well-known, we give a detailed proof here for completeness. 
        We use the method of contradiction to prove this conclusion.
   	   	Assume that there exists a small $\varepsilon>0$ such that  the following set 
   	   	$$K:=K_{\varepsilon}=\left\{x: \liminf_{n \rightarrow \infty} \frac{m_n(x)}{n}<1-4\varepsilon \right\} $$
        has positive $\mu$-measure.
   	   	By the Birkhoff ergodic theorem, there exists $K'\subset K$ with $\mu(K')>0$ and $N\in \N$ such that 
   	   	$$\forall n>N,\forall x\in K',~\# \Big\{0\leq i<n:f^i(x)\in E\Big\}\in \left(n(1-\varepsilon)\mu(E),n(\frac{1-2\varepsilon}{1-3\varepsilon})\mu(E)\right).$$
   	    For each $x\in K'$,   choose $n_x \in \N$ such that $m_{n_x}(x) < n_x (1-3\varepsilon)$ and $N < n_x(1-3\varepsilon)$.
   	    Hence, 
   	    $$n_x(1-2\varepsilon)\mu(E)\geq \# \Big\{0\leq j<n_x (1-3\varepsilon) :f^j(x)\in E\Big\}=\# \Big\{0\leq j<n_x :f^j(x)\in E\Big\} \geq n_x(1-\varepsilon)\mu(E).$$
   	    This contradiction yields the desired result.  	   
   	   \end{proof}
   	   
   	    Fix a finite partition $\cQ$ with $\nu(\partial \cQ)=0$ and $\diam(\cQ)<\varepsilon_{f}$.  	    
   	    Given $\alpha>0$, we will show that $h_{\nu}^{F}(f)\leq h_{\nu}(f,\cQ)+2\alpha$.  
   	    
   	    As introduced in Section \ref{Sec:entropy}, let $\xi$ be a measurable partition subordinate to $W^F$ with respect to $\nu$, and let $\{\nu_{\xi(x)}\}$ be a family of conditional measures of $\nu$ with respect to $\xi$.
   	    
   	    We choose a compact subset $E_{\nu}\subset \Lambda$, which is always called a Pesin set, such that $\nu(E_{\nu})>\frac{1}{2}$ and there exist constants $C_{\nu}$ and $\delta_{\nu}$ for which 
   	     \begin{equation}\label{eq:Cdeltanu}
   	     	C_{\nu}\geq 1,~0< \delta_{\nu} \leq \hat{\varepsilon}_{f},~\text{and}~\delta_x\geq \delta_{\nu},~C_x\leq C_{\nu}~\text{for every}~x\in E_{\nu}.
   	     \end{equation}  	    
   	     Let
   	     $$m_n^{\nu}(x)=\max\{0\leq j<n:f^j(x)\in E_{\nu}\}.$$
   	    
   	  Using Birkhoff ergodic theorem, Proposition \ref{Prop:Two Balls} and Lemma \ref{return-time}, one can choose a compact subset $K_{\nu}$ such that 
   	    \begin{enumerate}
   	    	\item [(1)]  $\nu(K_{\nu})>\frac{1}{2}$ and $K_{\nu}\subset E_{\nu}$; 
   	    	\item [(2)]  $\displaystyle{\frac{1}{n}\sum_{k=0}^{n-1} \delta_{f^k(x)} \rightarrow \nu~(n\to\infty)}$ uniformly on $K_{\nu}$;
   	    	\item [(3)]  $\displaystyle{\frac{1}{n} m_n^{\nu}(x)\rightarrow 1~(n\to\infty)}$ uniformly on $K_{\nu}$;
   	    	\item [(4)]  there exists $\rho_{\nu}>0$ such that 
   	    		$$ h_{\nu}^F(f)\leq \liminf_{n\rightarrow +\infty} -\frac{1}{n}\log \nu_{\xi(x)}\left(K_{\nu}\cap B(x,n,\rho_{\nu})\right)+\alpha, ~\forall x\in K_{\nu}.$$
   	    \end{enumerate}    	   
   	   For $\mu$-almost every $x$, let  $W^F_{\delta}(x)=\exp_x \graph(g_x|_{F(x)(\delta)})$ for each $0< \delta\leq \delta_x$. By the choice of the set $K_{\nu}$, one can choose a point $x_{\nu}\in K_{\nu}$ such that $\nu_{\xi(x_{\nu})}(K_{\nu}\cap W^F_{\delta}(x_{\nu}))>0$ for every $\delta\in (0,\delta_{\nu})$. 
   	  Fix some $\delta_{D}\in (0,\delta_{\nu})$ such that 
   	   \begin{align}\label{eq:ChoseD}
   	   	W^F_{\delta_{D}}(x_{\nu})\subset W^F_{2\delta_{D}}(x) \subset W^F_{\delta_{\nu}}(y), ~\forall x,y\in K_{\nu}\cap W^F_{\delta_{D}}(x_{\nu}).
   	   \end{align}   	   
   	   Denote by $D_{\nu}$ the disc $W^F_{\delta_{D}}(x_{\nu})$, and let 
   	   $\omega_{\nu}(\cdot):=\nu_{\xi(x_{\nu})}(\cdot \cap K_{\nu}\cap D_{\nu})/\nu_{\xi(x_{\nu})}( K_{\nu}\cap D_{\nu})$.
   	   
   	   Let $\cP_{\nu}$ be a finite partition with $\diam(\cP_{\nu})<\rho_{\nu}$ and $\nu(\partial \cP_{\nu})=0$. It follows from  Proposition \ref{Pro:unstable-entropy-partition}  that
   	   \begin{equation}\label{eq:5.0}
   	   	h_{\nu}^{F}(f)\leq  \liminf_{n\rightarrow +\infty} \frac{1}{n}H_{\omega_{\nu}}(\cP_{\nu}^n)+\alpha.
   	   \end{equation}   	   
       By the standard entropy theory, one can show that 
       \begin{align}\label{eq:5.1}
       		\frac{1}{n}H_{\omega_{\nu}}(\cP_{\nu}^n)&\leq \frac{1}{n}H_{\omega_{\nu}}(\mathcal{Q}^n)+\frac{1}{n}H_{\omega_{\nu}}(\cP_{\nu}^n|\mathcal{Q}^n),~\forall n>0. 
       \end{align}
       \begin{proposition}\label{Prop:Tail-Unstable}
       	Let $\cQ$, $\alpha$, $\omega_{\nu}$ and $\cP_{\nu}$ be chosen as described above, then 
       	$$\limsup_{n\rightarrow \infty} \frac{1}{n}H_{\omega_{\nu}}(\cP_{\nu}^n|\mathcal{Q}^n)\leq \alpha.$$
       \end{proposition}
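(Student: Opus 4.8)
The plan is to bound the conditional entropy by an atom‑counting quantity \emph{along the unstable disc $D_{\nu}$}, and then to exploit two facts: that the dominated splitting forbids bending of the iterates of $D_{\nu}$ (Proposition~\ref{Prop: Domin}), so that everything can be read off inside $W^{F}$ with $d^{F}$ and the ambient metric comparable; and that along $W^{F}$ the Pesin machinery (Theorem~\ref{Thm:Unstable}) gives backward contraction, so that dynamical balls at the two \emph{fixed} scales $\varepsilon_{f}$ and $\rho_{\nu}$ refine one another only sub‑exponentially. Concretely, put $N_{n}(z):=\#\{P\in\cP_{\nu}^{n}:\omega_{\nu}(P\cap\cQ^{n}(z))>0\}$. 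From $H_{\omega_{\nu}}(\cP_{\nu}^{n}\mid\cQ^{n})=\sum_{Q\in\cQ^{n}}\omega_{\nu}(Q)H_{(\omega_{\nu})_{Q}}(\cP_{\nu}^{n})$ and $H_{\cdot}(\cdot)\le\log\#\{\text{atoms of positive mass}\}$ one gets
$$\frac{1}{n}H_{\omega_{\nu}}(\cP_{\nu}^{n}\mid\cQ^{n})\le\int\frac{1}{n}\log N_{n}(z)\,\intd\omega_{\nu}(z).$$
Since $1\le N_{n}(z)\le(\#\cP_{\nu})^{n}$, the integrands are bounded, so by the reverse Fatou lemma it is enough to prove $\limsup_{n\to\infty}\tfrac1n\log N_{n}(z)\le\alpha$ for $\omega_{\nu}$‑a.e.\ $z$, and since $\omega_{\nu}$ is carried by $K_{\nu}$ it suffices to show this for every $z\in K_{\nu}$.

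Fix $z\in\supp\omega_{\nu}\subset K_{\nu}\cap D_{\nu}\subset W^{F}(x_{\nu})$. By \eqref{eq:epsilonf} we have $\exp_{x}^{-1}B(x,2\varepsilon_{f})\subset R_{x}(\hat{\varepsilon}_{f})\subset R_{x}(\hat{\varepsilon}_{1})$, so Proposition~\ref{Prop: Domin} applies to every piece of $D_{\nu}$ whose forward orbit stays $2\varepsilon_{f}$‑close to that of $z$: all its forward iterates remain graphs over $F$ with Lipschitz constant $\le\frac13$. Hence on such pieces $d^{F}$ and the ambient metric are uniformly comparable, and there is $C_{1}\ge1$ (depending only on $C_{\rm ang}$ and the $\exp$‑bounds) such that, writing $V^{F}(w,k,\rho):=\{y\in W^{F}(w):d^{F}_{f^{j}w}(f^{j}w,f^{j}y)<\rho,\ 0\le j<k\}$,
\begin{enumerate}
\item[(i)] $\cQ^{n}(z)\cap D_{\nu}\subset V^{F}(z,n,C_{1}\varepsilon_{f})$;
\item[(ii)] each $P\in\cP_{\nu}^{n}$ with $\omega_{\nu}(P\cap\cQ^{n}(z))>0$ contains a point $w_{P}\in\cQ^{n}(z)\cap D_{\nu}$ with $P\cap D_{\nu}\subset V^{F}(w_{P},n,C_{1}\rho_{\nu})$.
\end{enumerate}
Thus, up to the partition‑boundary bookkeeping discussed below, $N_{n}(z)$ is controlled by the number of dynamical balls $V^{F}(\cdot,n,C_{1}\rho_{\nu})$ needed to cover $V^{F}(z,n,C_{1}\varepsilon_{f})$.

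Let $m:=m_{n}^{\nu}(z)$ be the last return of $z$ to $E_{\nu}$ before time $n$; by property~(3) of $K_{\nu}$, $m/n\to1$ uniformly on $K_{\nu}$, so $n-m=o(n)$. Split the $\cP_{\nu}$‑itinerary over $[0,n)$ of a point of $V^{F}(z,n,C_{1}\varepsilon_{f})$ into the head $[0,m]$ and the tail $(m,n)$; the tail accounts for a factor $\le(\#\cP_{\nu})^{\,n-m}=e^{o(n)}$. For the head, apply Theorem~\ref{Thm:Unstable}, anchoring the contraction estimate (d) at the \emph{good} time $m$, where $C_{f^{m}z}\le C_{\nu}$ and $\delta_{f^{m}z}\ge\delta_{\nu}$ (legitimate once $C_{1}\varepsilon_{f}\le\delta_{\nu}$, which we may assume): in a Pesin chart along $W^{F}(z)$ adapted to the window $[0,m]$, the set $V^{F}(z,m+1,r)$ is, up to a factor $e^{\pm\varepsilon_{\nu}m}$, a box whose side‑lengths in the Oseledets directions are $re^{-m\lambda_{i}}e^{\pm\varepsilon_{\nu}m}$ — so its \emph{shape} depends only on the orbit of $z$, not on $r$. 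Hence $V^{F}(z,m+1,C_{1}\varepsilon_{f})$ is covered by at most $\big\lceil C_{1}^{2}\varepsilon_{f}/\rho_{\nu}\big\rceil^{\dim F}$ balls $V^{F}(\cdot,m+1,\rho_{\nu}/C_{1})$, a bound independent of $n$ apart from the factor $e^{\varepsilon_{\nu}m}$. Combining head and tail, and converting covering numbers into numbers of atoms with positive $\omega_{\nu}$‑mass by working with a $\cP_{\nu}$ whose atoms are regular and whose boundary is $\nu$‑null (so that after removing the $\omega_{\nu}$‑null set $\bigcup_{j\ge0}f^{-j}\partial\cP_{\nu}$ only a fixed constant is lost), one obtains $N_{n}(z)\le C\,\big\lceil C_{1}^{2}\varepsilon_{f}/\rho_{\nu}\big\rceil^{\dim F}\,(\#\cP_{\nu})^{\,n-m}\,e^{\varepsilon_{\nu}n}$ with $C$ fixed. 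Dividing by $n$, letting $n\to\infty$ and using $n-m=o(n)$ uniformly on $K_{\nu}$ gives $\limsup_{n}\tfrac1n\log N_{n}(z)\le\varepsilon_{\nu}\le\alpha$, the Pesin block being chosen (as we may) with $\varepsilon_{\nu}\le\alpha$. With the reduction of the first paragraph, this proves the proposition.

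The main obstacle is the two‑scale estimate of the third paragraph: controlling the covering number of a dynamical ball along $W^{F}$ at two fixed scales in the presence of non‑uniform hyperbolicity. The two devices that make it go through are (a) anchoring the Pesin estimates at the last return time $m$, so that the binding constraints of the dynamical ball sit at a time with good Pesin constants — this is exactly why property~(3) of $K_{\nu}$ was arranged — and (b) the observation that the \emph{shape} of the dynamical ball is independent of the radius, so the distortion cancels between the two scales and only the mild Oseledets‑regularity factor $e^{\varepsilon_{\nu}m}$ survives, which is absorbed by choosing the Pesin block with $\varepsilon_{\nu}\le\alpha$. The conversion of covering numbers into numbers of partition atoms is a routine technicality handled by the genericity hypotheses $\nu(\partial\cP_{\nu})=\nu(\partial\cQ)=0$.
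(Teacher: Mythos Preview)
Your reduction to a pointwise estimate of $N_n(z)$ via reverse Fatou, and your use of the last return time $m=m_n^{\nu}(z)$ to separate head and tail, mirror the paper's strategy. But the heart of your argument contains two genuine gaps.

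\textbf{The scale gap is assumed away.} You write that the Pesin estimate (d) of Theorem~\ref{Thm:Unstable} may be anchored at time $m$ ``legitimate once $C_1\varepsilon_f\le\delta_\nu$, which we may assume''. You may not. The scale $\varepsilon_f$ is fixed once for the dominated splitting (it is the $\varepsilon_f$ appearing in the statement of Theorem~\ref{Thm:A}, uniform in $\nu$), whereas $\delta_\nu$ is the size of the Pesin local unstable manifold on the block $E_\nu$ and is typically much smaller. Bridging this gap between the dominated scale $\hat\varepsilon_1$ and the Pesin scale $\delta_\nu$ is precisely the content of the proposition. The paper does it by first using Proposition~\ref{Prop: Domin} to keep $f^j(Q_n^{j,D})$ inside a Lipschitz graph over $F(f^j x_\ast)(\hat\varepsilon_1)$, and then applying a covering at time $j$ (Lemma~\ref{Lem:Covering}, via Propositions~\ref{Prop:Var} and~\ref{Prop:Cover}) by Pesin-scale discs $W^F_{\delta_{n,j}}(f^j x_k)$, with $\delta_{n,j}$ chosen so small (see~\eqref{eq:deltanj}) that \emph{all} iterates, forward to $n$ and backward to $0$, of each covering disc have diameter $\le\eta_\nu/4$. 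The covering count picks up only the factor $\lceil 4\hat\varepsilon_1/\delta_{n,j}\rceil^{\dim F}$, which is subexponential because $n-j\le n\beta$. Your ``Pesin-box'' claim that $V^F(z,m+1,r)$ has side-lengths $re^{-m\lambda_i}e^{\pm\varepsilon_\nu m}$ independent of $r$ requires working inside the Pesin chart at $f^m z$, which has size $\delta_\nu$; at radius $C_1\varepsilon_f$ you are outside that chart, and the estimate is unavailable. The related assumption $\varepsilon_\nu\le\alpha$ is also not part of the setup: $\varepsilon_\nu$ is produced by Theorem~\ref{Thm:Unstable} before $\alpha$ enters.

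\textbf{Covering numbers do not convert to atom counts for free.} Even granting your covering by balls $V^F(\cdot,m+1,\rho_\nu/C_1)$, each such ball can meet exponentially many atoms of $\cP_\nu^{m+1}$: whenever $f^i$ of the ball straddles $\partial\cP_\nu$, the atom count doubles. Removing the $\omega_\nu$-null set $\bigcup_j f^{-j}\partial\cP_\nu$ does nothing for this --- it is the frequency of visits to a \emph{neighbourhood} of $\partial\cP_\nu$ that matters. The paper chooses $\eta_\nu$ with $\nu(B(\partial\cP_\nu,\eta_\nu))\le\beta/\log\#\cP_\nu$ (equation~\eqref{eq:etanu}) and then uses property~(2) of $K_\nu$ to bound the number of bad times by $2n\beta/\log\#\cP_\nu$, so that each covering disc meets at most $e^{2n\beta}$ atoms of $\cP_\nu^n$. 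Your ``only a fixed constant is lost'' is simply false without this argument.
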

       We now give the proof of Theorem \ref{Prop:Key} by assuming that Proposition \ref{Prop:Tail-Unstable} holds, and postpone its proof to the next section. 
       
       \begin{proof}[Proof of Theorem \ref{Prop:Key}]
       	 For each $n>0$, let $ \omega_{\nu}^n:=\frac{1}{n}\sum_{j=0}^{n-1}\omega_{\nu}\circ f^{-j}$.   For every $0<m<n$, using Lemma \ref{Lem:11} we have that
       	\begin{align*}
       		\frac{1}{n}H_{\omega_{\nu}}(\mathcal{Q}^n)\leq \frac{1}{m}H_{\omega_{\nu}^n}(\mathcal{Q}^m)+\frac{2m}{n}\log \# \mathcal{Q}
       	\end{align*}
       	
       	We claim that $\omega_{\nu}^n\rightarrow \nu$ as $n\rightarrow \infty$. 
       	In fact, note that 
       	$$\int \phi(x) d\omega_{\nu}^n=\int \frac{1}{n}\left(\phi(x)+\phi(f(x))+\cdots + \phi(f^{n-1}(x))\right)d \omega_{\nu}$$
        for each continuous function $\phi:M\rightarrow \R$.
        Since $\omega_{\nu}$ is supported on $K_{\nu}\cap D_{\nu}\subset K_{\nu}$, one has that $\frac{1}{n} \sum_{j=0}^{n-1} \phi(f^j(x))$ converges uniformly to $\int \phi d\nu$ on $K_{\nu}$. Hence, we have that
       $$\lim_{n\rightarrow  \infty} \int \phi(x) d\omega_{\nu}^n=\int \lim_{n\rightarrow  \infty} \frac{1}{n} \sum_{j=0}^{n-1} \phi(f^j(x)) d \omega_{\nu}=\int \phi d\nu.$$
       This implies that $\omega_{\nu}^n\rightarrow \nu~(n\to \infty)$. 
       Since $\nu(\partial \cQ)=0$, we get 
       $$\lim_{n\to\infty} H_{\omega_{\nu}^n}(\mathcal{Q}^m)=H_{\nu}(\mathcal{Q}^m).$$
       Therefore, we have that 
       $$\limsup_{n\rightarrow \infty} \frac{1}{n}H_{\omega_{\nu}}(\mathcal{Q}^n)\leq \frac{1}{m}H_{\nu}(\mathcal{Q}^m).$$
       By Proposition \ref{Prop:Tail-Unstable} and together with \eqref{eq:5.0} and  \eqref{eq:5.1}, we have
       $$h_{\nu}^F(f)\leq \frac{1}{m}H_{\nu}(\mathcal{Q}^m)+2\alpha.$$
       Letting $m\rightarrow \infty$, we have $h_{\nu}^F(f)\leq h_{\nu}(f,\cQ)+2\alpha$. The arbitrariness of $\alpha$ implies the desired result  of Theorem \ref{Prop:Key}.
       \end{proof}
       
       \section{Proof of Proposition \ref{Prop:Tail-Unstable}} \label{Sec:PofP5}
       To complete the proof of the main theorems in this paper, it suffices to show Proposition \ref{Prop:Tail-Unstable} in this section.

       The quantity we need to estimate is similar to the \textit{tail entropy}, which evaluates how many  Bowen balls of small radius are required to cover a  Bowen ball of fixed radius. The difference is that all of the Bowen balls here are  restricted to the $W^F_{\delta_{D}}(x_{\nu})$, which is denoted by $D_\nu$.
       
       By the definition of conditional entropy, we have 
       \begin{equation}\label{eq:60}
       	\begin{aligned}
       		H_{\omega_\nu}(\mathcal{P}_{\nu}^n|\mathcal{Q}^n)&= \sum_{Q_n \in \mathcal{Q}^n} \omega_{\nu}(Q_n) \cdot H_{\omega_{\nu}|_{Q_n}}(\mathcal{P}_{\nu}^n) \\ 
       		&\leq \sum_{Q_n \in \mathcal{Q}^n} \omega_{\nu}(Q_n) \cdot \log \#\{P_n\in \mathcal{P}_{\nu}^n :P_n\cap Q_n \cap D_{\nu}\cap K_{\nu} \neq \emptyset \},
       	\end{aligned} 
       \end{equation}       
       where the last inequality using the fact that $\omega_\nu$ is supported on $D_{\nu}\cap K_{\nu}$, and 
       $\omega_{\nu}|_{Q_n}(\cdot)=\dfrac{\omega_{\nu}(\cdot \cap Q_n)}{\omega_{\nu}(Q_n)}$ is the normalized restriction of $\omega_{\nu}$ to $Q_n$. 
       
       Choose $\beta>0$ small enough, such that 
       \begin{equation}\label{eq:beta}
       	\beta(2+\dim F \cdot \log \sup_{x\in M} \|D_xf\|)\leq \alpha.
       \end{equation}
       Since $\nu(\partial \cP_{\nu})=0$, we choose $\eta_{\nu}>0$ such that (recall \eqref{eq:epsilonf}, \eqref{eq:Cdeltanu} and \eqref{eq:ChoseD} for $\varepsilon_f,\delta_{\nu}$ and $D_{\nu}$)
       \begin{equation}\label{eq:etanu}
       	\begin{aligned}
       		&\eta_{\nu}<\min\{\delta_{\nu},\varepsilon_f\},~\nu(B(\partial {\cal P}_{\nu},\eta_{\nu}))\leq \frac{\beta}{\log \# \mathcal{P}_{\nu}},~\nu(\partial B(\partial {\cal P}_{\nu},\eta_{\nu}))=0\\
       		&\text{and for every }~x,y\in D_{\nu}\cap K_\nu,~W^{F}_{\eta_{\nu}}(x)\subset W^F_{\delta_{\nu}}(y).
       	\end{aligned}
       \end{equation}  
       Then, by the construction  of  the set $K_{\nu}$, there exists $N_{\nu}\in \N$ such that for every $n\geq N_{\nu}$ and every $x\in K_{\nu}$, the following properties hold: 
       \begin{align*}
       	\# \Big\{0\leq i<n:f^i(x)\in B(\partial {\cal P}_{\nu},\eta_{\nu})\Big\}\leq \dfrac{2n\beta}{\log \# \mathcal{P}_{\nu}} \quad \text{and} \quad m_n^{\nu}(x)\geq n(1-\beta).       	
       \end{align*}
      Let $Q_n^{j,D}:=\Big\{x\in Q_n \cap D_{\nu}\cap K_{\nu}: m_n^{\nu}(x)=j\Big\}$, then one can show that  $\# \Big\{j: Q_n^{j,D}\neq \emptyset\Big\}\leq n\beta$ for each $n\geq N_{\nu}$.
       Hence, we have that 
       \begin{equation}\label{eq:64}
       	\#\Big\{P_n\in \mathcal{P}_{\nu}^n :P_n\cap Q_n \cap D_{\nu}\cap K_{\nu} \neq \emptyset \Big\}\leq
       	 n\beta \sup_{j\in [n(1-\beta),n)}	\#\Big\{P_n\in \mathcal{P}_{\nu}^n :P_n\cap Q_n^{j,D}  \neq \emptyset \Big\}
       \end{equation}
       
       Fix $n\geq N_{\nu}$, $Q_n^{j,D}\neq \emptyset$ and $x_{\ast}\in Q_n^{j,D}$. 
       Since $\diam(\cQ)<\varepsilon_f$, one has $Q_n^{j,D} \subset B(x_{\ast},n,\varepsilon_f)$.
       By the choice of $\varepsilon_{f}$ (see \eqref{eq:epsilonf}), we have that
       $$\exp_{x_{\ast}}^{-1} B(x_{\ast},n,2\varepsilon_f)\subset \bigcap_{i=0}^{n-1} F_{f^i(x_{\ast})}^{-i}R_{f^i(x_{\ast})}(\hat{\varepsilon}_1).$$

       By the choice of $D_{\nu}$ (see \eqref{eq:ChoseD}) and $x_{\ast} \in Q_n^{j,D} \subset D_{\nu}\cap K_{\nu}$, it follows that
       $D_{\nu}\subset W^F_{\delta_{\nu}}(x_{\ast})$. 
       Let $\phi=g_{x_{\ast}}$ and ${\rm Dom}(\phi)=F(x_{\ast})(\delta_{\nu})$, where the map $g_{x_{\ast}}$ satisfies that $W^F_{\delta_{\nu}}(x_{\ast})=\exp_{x_{\ast}}(\graph (g_{x_{\ast}}|_{F(x_{\ast})(\delta_{\nu})}))$.
       Recall that $\delta_{\nu}< \hat{\varepsilon}_1$ and $\Lip(\phi)\leq \frac{1}{4}$.       
       By Proposition \ref{Prop: Domin}, there is a family of $C^1$ maps $\left \{\varphi_i: {\rm Dom}(\varphi_i)\rightarrow E(f^i(x_{\ast}))\right\}_{i=0}^{n-1}$ satisfying 
       \begin{enumerate}
       	\item[(i)]  $\varphi_i(0)=0$, ${\rm Dom}(\varphi_i)$ is an open subset of $F(f^i(x_{\ast}))(\hat{\varepsilon}_1)$ and $\Lip(\varphi_i)\leq \frac{1}{3}$;
       	\item[(ii)]  $F_{x_{\ast}}^i(\graph(\phi)) \supset \graph(\varphi_i)\supset F_{x_{\ast}}^i\left( \bigcap_{t=0}^{i}F_{f^t(x_{\ast})}^{-t} (R_{f^t(x_{\ast})}(\hat{\varepsilon}_1))\cap \graph(\phi)\right),~\forall i=0,\cdots,n-1$.
       \end{enumerate}   
       Therefore, for every $0\leq i<n$ we have 
       \begin{equation}\label{eq:varphij}
       	f^i( Q_n^{j,D})\subset \exp_{f^i(x_{\ast})} F_{x_{\ast}}^i\left( \bigcap_{t=0}^{n-1} F_{f^t(x_{\ast})}^{-t} (R_{f^j(x_{\ast})}(\hat{\varepsilon}_1))\cap \graph(\phi)\right) \subset \exp_{f^i(x_{\ast})}\graph(\varphi_i).
       \end{equation}
   Take
       \begin{align}\label{eq:deltanj}
       \delta_{n,j}=\min\Big\{ \eta_{\nu}  (20C_{\nu})^{-1},~\eta_{\nu} 20^{-1}C_f^{j-n},~\delta_{\nu}\Big\},
       \end{align}
       where $C_f:=\sup_{x\in M}\|D_xf\|$, $\eta_{\nu}$ is chosen as in \eqref{eq:etanu} and $C_{\nu},\delta_{\nu}$ are chosen as in \eqref{eq:Cdeltanu}.

       For every $x\in Q_n^{j,D}$, 
        $W^F_{\delta_{n,j}}(f^j(x))$ is well defined since $f^j(x)\in E_{\nu}$ and $\delta_{n,j}\leq \delta_{\nu}$.
       For every $y,z\in f^{-j}W^F_{\delta_{n,j}}(f^j(x))$, if $0\leq i\leq j$, then
       \begin{align*}
         d^F_{f^{i}(x_{\ast})}(f^i(y),f^i(z))\leq C_{\nu}e^{-(j-i)\lambda_{\nu}} d^F_{f^{j}(x_{\ast})}(f^j(y),f^j(z))\leq 5C_{\nu}e^{-(j-i)\lambda_{\nu}} \delta_{n,j}\leq \frac{\eta_{\nu}}{4}.
       \end{align*}
       If $j< i< n$, then
       $$d^F_{f^{i}(x_{\ast})}(f^i(y),f^i(z))\leq C_f^{i-j}d^F_{f^{j}(x_{\ast})}(f^j(y),f^j(z))\leq 5C_f^{i-j}\delta_{n,j}\leq \frac{\eta_{\nu}}{4}.$$
       Consequently, $d^F_{f^{i}(x_{\ast})}(f^i(y),f^i(z))\leq \frac{\eta_{\nu}}{4}$ for every $0\leq i<n$. Therefore, one has
       \begin{equation}\label{eq:diam}
           \diam( f^{-j+i}W^F_{\delta_{n,j}}(f^j(x)))\leq \frac{\eta_{\nu}}{4},~\forall 0\leq i<n.
       \end{equation}
       
       \begin{lemma}\label{Lem:Covering}
       	There exists a family of points $\{x_k\}_{k=1}^{M}\subset Q_n^{j,D}$ with 
       $$M\leq C_{\dim}\cdot \lceil \dfrac{4\hat{\varepsilon}_1}{\delta_{n,j}}\rceil^{\dim F} \quad \text{and} \quad \bigcup_{k=1}^{M} W^F_{\delta_{n,j}}(f^j(x_k))\supset f^j(Q_n^{j,D}).$$
       \end{lemma}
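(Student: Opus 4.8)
The plan is to produce the covering family of discs by projecting everything through the chart at $f^j(x_\ast)$ onto the fiber $F(f^j(x_\ast))$, then apply the elementary covering bound of Proposition \ref{Prop:Cover} to the set of $F$-coordinates of the points of $f^j(Q_n^{j,D})$, and finally pull the centers of the covering discs back to genuine points of $Q_n^{j,D}$. First I would observe that, by \eqref{eq:varphij} with $i=j$, the image $f^j(Q_n^{j,D})$ sits inside $\exp_{f^j(x_\ast)}(\graph(\varphi_j))$, where $\varphi_j : \mathrm{Dom}(\varphi_j)\to E(f^j(x_\ast))$ with $\mathrm{Dom}(\varphi_j)\subset F(f^j(x_\ast))(\hat\varepsilon_1)$ and $\Lip(\varphi_j)\le\frac13$. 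Since $Q_n^{j,D}\subset B(x_\ast,n,\varepsilon_f)$ and $\exp_{x_\ast}^{-1}B(x_\ast,n,2\varepsilon_f)\subset\bigcap_{i=0}^{n-1}F_{f^i(x_\ast)}^{-i}R_{f^i(x_\ast)}(\hat\varepsilon_1)$, the coordinates actually lie in a slightly smaller region — I would just record that $\pi^F_{f^j(x_\ast)}\circ\exp_{f^j(x_\ast)}^{-1}\bigl(f^j(Q_n^{j,D})\bigr)\subset F(f^j(x_\ast))(\hat\varepsilon_1)$, possibly after noting the factor-$4$ slack from the $\|D\exp^{\pm1}\|\le2$ bounds, which is where the $4\hat\varepsilon_1$ in the statement comes from.

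Next, apply Proposition \ref{Prop:Cover} with $\hat\varepsilon=4\hat\varepsilon_1$ (or whatever the clean constant turns out to be after the chart distortion is accounted for) and $\hat\delta=\delta_{n,j}$: this yields points $u_1,\dots,u_M\in F(f^j(x_\ast))(4\hat\varepsilon_1)$ with $M\le C_{\dim}\lceil 4\hat\varepsilon_1/\delta_{n,j}\rceil^{\dim F}$ and $F(f^j(x_\ast))(4\hat\varepsilon_1)\subset\bigcup_k F(f^j(x_\ast))(u_k,\delta_{n,j})$. Discarding those $u_k$ whose $\delta_{n,j}$-ball meets no $F$-coordinate of a point of $f^j(Q_n^{j,D})$, I may assume each surviving $u_k$ has $\|u_k-\pi^F_{f^j(x_\ast)}\exp_{f^j(x_\ast)}^{-1}(f^j(x_k))\|<\delta_{n,j}$ for some $x_k\in Q_n^{j,D}$; this is the step that turns abstract lattice centers into actual points $x_k$ of $Q_n^{j,D}$, so the claimed inclusion $\{x_k\}\subset Q_n^{j,D}$ holds.

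Finally I have to verify $\bigcup_k W^F_{\delta_{n,j}}(f^j(x_k))\supset f^j(Q_n^{j,D})$. Take any $x\in Q_n^{j,D}$; its $F$-coordinate at $f^j(x_\ast)$ lies within $\delta_{n,j}$ of some $u_k$, hence within (roughly) $2\delta_{n,j}$ of that of $f^j(x_k)$ along the fiber $F(f^j(x_\ast))$. Since both $f^j(x)$ and $f^j(x_k)$ lie on the graph of $\varphi_j$ over these nearby base points and $\Lip(\varphi_j)\le\frac13$, the two points are within a comparable distance along the graph, and transporting back through $\exp$ (again the $\|D\exp\|\le2$ bounds) this is $\le\delta_{n,j}$ in $W^F_{\delta_\nu}(f^j(x_k))$, i.e. $f^j(x)\in W^F_{\delta_{n,j}}(f^j(x_k))$; here one uses $f^j(x_k)\in E_\nu$ so that $\delta_{f^j(x_k)}\ge\delta_\nu\ge\delta_{n,j}$ and the disc is well defined, together with \eqref{eq:ChoseD}/\eqref{eq:etanu} to guarantee all these local discs live inside a single chart where the graph description applies.

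The main obstacle I anticipate is bookkeeping the chart distortions cleanly: one must keep careful track of the factor-$2$ derivative bounds for $\exp_{f^j(x_\ast)}^{\pm1}$ and the $\Lip\le\frac13$ bound for $\varphi_j$ simultaneously, so that a $\delta_{n,j}$-separation in $F$-coordinates translates into a genuine $\delta_{n,j}$-ball $W^F_{\delta_{n,j}}$ in the intrinsic $d^F$ metric — not a constant multiple of it — which is exactly what is needed for the later diameter estimate \eqref{eq:diam} to go through. Getting the constant in Proposition \ref{Prop:Cover} to read precisely $4\hat\varepsilon_1$ (rather than some other multiple of $\hat\varepsilon_1$) may require choosing $\delta_{n,j}$ a bit more conservatively, or simply absorbing a harmless constant into $C_{\dim}$.
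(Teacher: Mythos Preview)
Your overall plan---project $f^j(Q_n^{j,D})$ to $F(f^j(x_\ast))$, invoke Proposition~\ref{Prop:Cover}, and pull the surviving centers back to points of $Q_n^{j,D}$---is exactly what the paper does. The gap is in the final verification that $f^j(x)\in W^F_{\delta_{n,j}}(f^j(x_k))$ once the $F(f^j(x_\ast))$-coordinates $u_x,u_{x_k}$ are close. You argue that both points lie on $\graph(\varphi_j)$, hence are close in $d^F$, hence $f^j(x)$ lands in the disc. Two problems: first, with covering radius $\delta_{n,j}$ the coordinates are only $2\delta_{n,j}$-close, and after the $\Lip(\varphi_j)\le\tfrac13$ and $\|D\exp\|\le 2$ factors you are well above $\delta_{n,j}$, not below it. Second, and more seriously, $W^F_{\delta_{n,j}}(f^j(x_k))$ is \emph{not} a $d^F$-ball; it is defined as $\exp_{f^j(x_k)}\graph\bigl(g_{f^j(x_k)}|_{F(f^j(x_k))(\delta_{n,j})}\bigr)$ in the chart at $f^j(x_k)$, not at $f^j(x_\ast)$. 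Knowing the points are $d^F$-close on $\graph(\varphi_j)$ does not by itself place $f^j(x)$ on this particular graph; you would still need a chart-change estimate and an identification of the two graph representations near $f^j(x_k)$, neither of which you supply.

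The paper closes this gap differently. It takes covering balls of radius $\tfrac14\delta_{n,j}$ (same bound on $M$), then uses Proposition~\ref{Prop:Var} to rewrite each $W^F_{\delta_{n,j}}(f^j(x_k))$ as a graph $\phi^\ast_{x_k}$ over $F(f^j(x_\ast))(u_{x_k},\tfrac12\delta_{n,j})$ in the chart at $f^j(x_\ast)$. It then shows by contradiction that $\phi^\ast_{x_k}$ and $\varphi_j$ agree on this domain: any point $f^j(x')$ on $\graph(\phi^\ast_{x_k})$ lies in $W^F_{\delta_{n,j}}(f^j(x_k))$, so by \eqref{eq:diam} and \eqref{eq:etanu} its preimage $x'$ lands in $W^F_{\delta_\nu}(x_\ast)=\exp_{x_\ast}\graph(\phi)$ and stays in the $2\varepsilon_f$-Bowen ball of $x_\ast$; the inclusion in Proposition~\ref{Prop: Domin}(ii) then forces $f^j(x')\in\exp_{f^j(x_\ast)}\graph(\varphi_j)$. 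This dynamical detour back to time~$0$ is the missing idea in your sketch, and it is precisely what replaces the chart bookkeeping you flagged as the main obstacle.
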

       \begin{proof}
       	Recall that $f^j( Q_n^{j,D})\subset \exp_{f^j(x_{\ast})}\graph(\varphi_j)$ and ${\rm Dom}(\varphi_j)\subset F(f^j(x_{\ast}))(\hat \varepsilon_1)$.     By Proposition \ref{Prop:Cover}, there exists a collection $\{F(f^j(x_{\ast}))(u_k, \frac{1}{4}\delta_{n,j})\}_{k=1}^{M}$ satisfies
        $$F(f^j(x_{\ast}))(\hat \varepsilon_1)\subset \bigcup_{k=1}^{M} F(f^j(x_{\ast}))(u_k,\frac{1}{4}\delta_{n,j}),~u_k\subset F(f^j(x_{\ast}))(\hat \varepsilon_1),~M\leq C_{\dim} \cdot \lceil \dfrac{4\hat{\varepsilon}_1}{\delta_{n,j}}\rceil^{\dim F}.$$    	
       		For $x\in Q_n^{j,D}$, let $$u_x=\pi_{f^j(x_{\ast})}^F\circ\exp_{f^j(x_{\ast})}^{-1}(f^j(x))\in {\rm Dom}(\varphi_j)\subset F(f^j(x_{\ast}))(\hat \varepsilon_1).$$
            Let $\psi_{f^j(x)}:F(f^j(x))(\delta_{n,j})\rightarrow E(f^j(x))$ satisfies $W^F_{\delta_{n,j}}(f^j(x))=\exp_{f^j(x)} \graph(\psi_{f^j(x)})$.
       		Recall that  $d(f^j(x),f^j(x_{\ast}))\leq \varepsilon_2$, $\delta_{n,j}\leq \delta_{\nu}< \hat{\delta}_1$ and $\Lip(\psi_{f^j(x)})\leq \frac{1}{4}$.        		      		
       		By Proposition \ref{Prop:Var}, there exists 
       		$$\phi_{x}^{\ast}=\psi_{f^j(x_{\ast})}^{f^j(x)}: F(f^j(x_{\ast}))(u_x,\frac{1}{2}\delta_{n,j})\rightarrow E(f^j(x_{\ast}))~\text{such that}~\exp_{f^j(x_{\ast})}\graph(\phi_x^{\ast})\subset W^F_{\delta_{n,j}}(f^j(x)).$$
            Let $\cI:=\Big\{1\le k\le M: \{u_x:x\in Q^{j,D}_n\} \cap  F(f^j(x_{\ast}))(u_k,\frac{1}{4}\delta_{n,j})\neq \emptyset \Big\}$.
       		For every $k\in \cI$, we choose a point $x_k\in Q^{j,D}_n$ such that $u_{x_k}\in F(f^j(x_{\ast}))(u_k,\frac{1}{4}\delta_{n,j})$.        		
       		Consider the family of local unstable manifolds $\{W^F_{\delta_{n,j}}(f^j(x_k))\}_{k\in \cI}$.
       		We will show that $\bigcup_{k\in \cI} W^F_{\delta_{n,j}}(f^j(x_k))\supset f^j(Q_n^{j,D})$.
       		
       		Assume that there exists some $f^j(x)\in f^j(Q_n^{j,D})-\bigcup_{k\in \cI} W^F_{\delta_{n,j}}(f^j(x_k))$.
       		Take $k \in \cI$ such that 
       		$$u_x\in F(f^j(x_{\ast}))(u_k,\frac{1}{4}\delta_{n,j})\subset F(f^j(x_{\ast}))(u_{x_k},\frac{1}{2}\delta_{n,j}).$$       		
       		Recall that 
            $$f^j(x)\in \exp_{f^j(x_{\ast})}\graph(\varphi_j)
       		,~f^j(x)\notin W^F_{\delta_{n,j}}(f^j(x_k)),~ \exp_{f^j(x_{\ast})}\graph(\phi_{x_k}^{\ast})\subset W^F_{\delta_{n,j}}(f^j(x_k)).$$ Hence, we have $\phi_{x_k}^{\ast}(u_x)\neq \varphi_j(u_x)$.
       		Let $f^j(x')=\exp_{f^j(x_{\ast})}(\phi^{\ast}_{x_k}(u_x)+u_x)$. Then, one has 
            \begin{equation}\label{eq:Lem61}
            f^j(x')\notin \exp_{f^j(x_{\ast})}\graph(\varphi_j).
            \end{equation}
            
       		Note that $f^j(x') \in W^F_{\delta_{n,j}}(f^j(x_k))$. By the choice of $\delta_{n,j}$ (see \eqref{eq:deltanj}), we have $x' \in  W^F_{\eta_{\nu}}(x_k)$. 
            By the choice of $\eta_{\nu}$ (see \eqref{eq:etanu}) and $x_k\in D_{\nu}$, 
       		we have $x'\in W^F_{\delta_{\nu}}(x_{\ast})=\exp_{x_{\ast}} \graph(\phi)$.
       		
       		For every $0\leq i<n$, by \eqref{eq:diam} we have $d(f^i(x_k),f^i(x'))< \eta_{\nu}< \varepsilon_f$,  and $d(f^i(x_k),f^i(x_{\ast}))< \varepsilon_f$ since $\diam \cQ <\varepsilon_f$. 
       		Hence, $d(f^i(x_{\ast}),f^i(x'))< 2\varepsilon_f$ for any $0\leq i<n$. 
       		Therefore, by the choice of $\varphi_j$ (see \eqref{eq:varphij}), we have
       		$$\exp_{x_{\ast}}^{-1}(x')\in \bigcap_{i=0}^{n-1}F_{f^i(x_{\ast})}^{-i} (R_{f^i(x_{\ast})}(\hat{\varepsilon}_1))\cap  \graph(\phi)\subset F_{f^j(x_{\ast})}^{-j} \graph(\varphi_j).$$      		
       		Consequently, we get that $f^j(x')\in \exp_{f^j(x_{\ast})} \graph(\varphi_j)$,
       		which contradicts with \eqref{eq:Lem61}.
       		This  completes the proof of this Lemma.
       \end{proof}
             
       \begin{proof}[Proof of Proposition \ref{Prop:Tail-Unstable}]
       	We first estimate $\#\Big\{P_n\in \mathcal{P}_{\nu}^n :P_n\cap Q_n^{j,D}  \neq \emptyset\Big \}$.
       	By Lemma \ref{Lem:Covering} there exists a family of sub-discs $\{W_k\}_{k=1}^{M}$ with 
       	
       	$$M\leq C_{\dim}\cdot \lceil \dfrac{4\hat{\varepsilon}_1}{\delta_{n,j}}\rceil^{\dim F},~\bigcup_{k=1}^{M} f^j(W_k )\supset f^j(Q_n^{j,D})~\text{and}~\diam(f^iW_k)\leq \frac{\eta_{\nu}}{4},~\forall 0\leq i<n.$$       	
       	Then, we have 
       	\begin{align}\label{eq:61}
       		\#\Big\{P_n\in \mathcal{P}_{\nu}^n :P_n\cap Q_n^{j,D}  \neq \emptyset\Big \}\leq M \sup_{k} \#\Big\{P_n\in \mathcal{P}_{\nu}^n :P_n\cap W_k \cap K_{\nu}  \neq \emptyset \Big\}.
       	\end{align}
       	For every $W\in \{W_k\}_{k=1}^{M}$, we have
       	$$\#\Big\{P_n\in \mathcal{P}_{\nu}^n :P_n\cap W \cap K_{\nu} \neq \emptyset \Big\} \leq  \prod_{i=0}^{n-1} \# \Big\{P \in \mathcal{P}_{\nu}: P\cap f^i (W \cap K_{\nu}) \neq \emptyset \Big\}.$$
       	Fix some point $z\in W \cap K_{\nu}$. If 
       	$\# \{P \in \mathcal{P}_{\nu}: P\cap f^i (W \cap K_{\nu}) \neq \emptyset \}>1$,
       	then by $\diam(f^iW)\leq \frac{\eta_{\nu}}{4}$ we must have $f^i(z)\in f^i (W \cap K_{\nu})\subset B(\partial \cP_{\nu},\eta_{\nu})$.       	
       	Hence, we have
       	$$\#\{P_n\in \cP_{\nu}^n: P_n\cap W\cap K_{\nu} \neq \emptyset \} \leq (\# \cP_{\nu})^{\#\{0\leq i<n:f^i(z)\in B(\partial \cP_{\nu},\eta_{\nu} )\}}\leq (\# \mathcal P_{\nu})^{2n\beta/\log \# \mathcal{P}_{\nu}}=e^{2n\beta}.$$
       	Consequently, by \eqref{eq:64} and \eqref{eq:61} we get that
       	$$\#\Big\{P_n\in \mathcal{P}_{\nu}^n :P_n\cap Q_n \cap D_{\nu}\cap K_{\nu} \neq \emptyset \Big\}\leq n\beta \cdot C_{\dim}\cdot \lceil \dfrac{4\hat{\varepsilon}_1}{\delta_{n,j}}\rceil^{\dim F} \cdot e^{2n\beta}.$$
       	By \eqref{eq:60} and the choice of $\delta_{n,j}$ and $\beta$ (see \eqref{eq:deltanj} and \eqref{eq:beta}) we have 
       	\begin{align*}
       		\limsup_{n\to\infty} \frac{1}{n}H_{\omega_\nu}(\mathcal{P}_{\nu}^n|\mathcal{Q}^n)\leq \beta(2+\dim F\cdot \log C_f) \leq \alpha.
       	\end{align*}
       This completes the proof of Proposition \ref{Prop:Tail-Unstable}.
       \end{proof}
\section*{Acknowledgement}
 The authors would like to thank professor D. Yang for his suggestions.
\printbibliography    
\end{sloppypar}

\flushleft{\bf Chiyi Luo} \\
\small School of Mathematics and Statistics, Jiangxi Normal University, Nanchang, 330022, P.R. China\\
\textit{E-mail:} \texttt{luochiyi98@gmail.com}\\

\flushleft{\bf Wenhui Ma} \\
\small School of Mathematical Sciences,  Soochow University, Suzhou, 215006, P.R. China\\
\textit{E-mail:} \texttt{20234207032@stu.suda.edu.cn}\\

\flushleft{\bf Yun Zhao} \\
\small School of Mathematical Sciences,  Soochow University, Suzhou, 215006, P.R. China\\
\textit{E-mail:} \texttt{zhaoyun@suda.edu.cn}\\

\end{document}